\documentclass[preprint,12pt]{elsarticle}
\usepackage{amsmath}
\usepackage{amssymb}
\usepackage{tikz}
\usepackage{float}
\usetikzlibrary{arrows.meta}

\usepackage{babel}

\usepackage{hyperref}

\usepackage{epsfig}
\usepackage{graphicx}
\usepackage[cp1251]{inputenc}
\usepackage{amssymb}
\usepackage{amsthm}
\usepackage{cmap}

\usepackage{amsmath}

\biboptions{sort&compress}

\newcommand{\sysn}{\left\{\begin{array}{rcl}}
\newcommand{\sysk}{\end{array}\right.}

\newtheorem{theorem}{Theorem}[section]
\newtheorem{lemma}[theorem]{Lemma}

\theoremstyle{example}
\newtheorem{example}[theorem]{Example}

\theoremstyle{definition}

\newtheorem{corollary}[theorem]{Corollary}

\journal{...}

\begin{document}

\title{On properties close to Polish cofinality in spaces of Baire-one and continuous functions}

\author{Alexander V. Osipov}

\address{Krasovskii Institute of Mathematics and Mechanics, \\ Ural Federal
 University, Yekaterinburg, Russia}

\ead{OAB@list.ru}

\begin{abstract} In this paper we prove that $B_1(X)$ has a dense cofinally Polish subspace if and only if $X$ is a $\Delta_1$-space. Also we prove that $B_1(X,[0,1])$ has a dense cofinally Polish subspace if and only if $X$ is a weak $\lambda$-space. We also establish that $C_p(X,[0,1])$ is pseudocompact if and only if $C_p(X,[0,1])$ has a dense cofinally Polish subspace. This result provides answers to four open questions from (Tkachuk in RACSAM 115(68), 2021).
\end{abstract}

\begin{keyword} cofinally Polish space \sep pseudocomplete space \sep  Baire-one function \sep strong Choquet property \sep Baire property \sep $B_1$-embedded \sep $B^*_1$-embedded \sep $\Delta_1$-space \sep weak $\lambda$-space \sep pseudocompact space \sep $C_p$-theory

\MSC[2010] 54C35  \sep 54E52  \sep 54H05   \sep 54C45

\end{keyword}

\maketitle 


\section{Introduction}

A Polish space is a separable, completely metrizable topological space, i.e., a space homeomorphic to a complete metric space that has a countable dense subset. Polish spaces are mostly studied today because they are the primary setting for descriptive set theory, including the study of Borel equivalence relations. Polish spaces are also a convenient setting for more advanced measure theory, in particular in probability theory.

A space $X$ is {\it cofinally Polish} if, for any continuous map $f : X \rightarrow M$ of $X$ onto a second
countable space $M$, there exist continuous onto maps $g : X \rightarrow P$ and $h : P \rightarrow M$ such that
$P$ is a Polish space and $h \circ g=f$.

In \cite{NiTkYe}, it was proved that a metrizable space is cofinally Polish if and only if it is Polish, and that for cosmic spaces, Polish cofinality coincides with domain representability.
Also in \cite{NiTkYe} it was proved that, for countable spaces, domain representability and subcompactness are equivalent to Polish cofinality.

In this paper, we investigate cofinally Polish property for the space $B_1(X)$ of Baire-one functions. We prove that $B_1(X)$ has a dense cofinally Polish subspace if and only if $X$ is a $\Delta_1$-space. Also we prove that $B_1(X,[0,1])$ has a dense cofinally Polish subspace if and only if $X$ is a weak $\lambda$-space.

We also establish that $C_p(X,[0,1])$ is pseudocompact if and only if $C_p(X,[0,1])$ has a dense cofinally Polish subspace. This result provides answers to four open questions in \cite{Tkachuk}.

\section{Definitions and notation}

 A space is {\it meager} (or {\it of the first Baire category}) if it
can be written as a countable union of closed sets with empty
interior. A topological space $X$ is {\it Baire} if the Baire
Category Theorem holds for $X$, i.e., the intersection of any
sequence of open dense subsets of $X$ is dense in $X$. Clearly, if
$X$ is a Baire space, then $X$ is not meager.

\medskip

Recall that $X\subseteq 2^{\omega}$ is

$\bullet$ a {\it  $\lambda$-set} if every countable subset of $X$ is $G_{\delta}$ (\S 40, \cite{Kur}).

$\bullet$ a {\it weak $\lambda$-set}  if for every pair of countable disjoint sets $Y,Z\subseteq X$ there exists a $G_{\delta}$-$F_{\sigma}$ set $H$ of $X$ such that $Y\subseteq H$ and $Z\subseteq X\setminus H$ (Def. 4.1 in \cite{RRS}).

The concepts of  $\lambda$-set and weak $\lambda$-set can be considered in arbitrary topological spaces.

A topological space $X$ is a {\it $\lambda$-space} if every countable subset of $X$ is $G_{\delta}$. A topological space $X$ is a {\it weak $\lambda$-space}  if for every pair of countable disjoint sets $Y,Z\subseteq X$ there exists a $G_{\delta}$-$F_{\sigma}$ set $H$ of $X$ such that $Y\subseteq H$ and $Z\subseteq X\setminus H$.

A family $\{A_{\alpha}: \alpha\in \kappa\}$ of subsets of a set $X$ is said to be {\it point-finite} if for every $x\in X$, $\{\alpha\in \kappa: x\in A_{\alpha}\}$ is finite.

A space $X$ has the {\it $\Delta_1$-property} \cite{KKL} if any disjoint sequence $\{A_n: n\in \omega\}$ of
countable subsets of $X$ has a point-finite open expansion, i.e., there exists a point-finite family $\{U_n: n\in \omega\}$ of open subsets of $X$ such that
$A_n\subset U_n$ for every $n\in \omega.$  The spaces with the $\Delta_1$-property are also called $\Delta_1$-spaces. It is clear that every $\lambda$-space is a $\Delta_1$-space.
If $X$ is a $\Delta_1$-space with a countable pseudocharachter then it is a $\lambda$-space (Theorem 2.19 in \cite{KKL}).

The Baire property have dual characterization through the presence (or
absence) of a winning strategy for players in the Banach-Mazur topological game \cite{Ox}.

The game $G(X)$ begins with the first $({\bf I})$ player; Player {\bf I} chooses a nonempty open set $V_0\subseteq X$.
Then the second ({\bf II}) player chooses a nonempty open set $V_1\subseteq V_0$.
At the {\it n}th step, player {\bf I} chooses a nonempty open set $V_{2n}\subseteq V_{2n-1}$, and then player {\bf II} chooses
a nonempty open set $V_{2n+1}\subseteq V_{2n}$.
Player {\bf I} wins the game $G(X)$ if $\bigcap\limits_{n\in \omega} V_n=\emptyset$. Otherwise, player {\bf II} wins.

 A topological space $X$ is a Baire space  if and only if player {\bf I} does not have a winning strategy in the game $G(X)$

A topological space $X$ is called a {\it Choquet space}  if player {\bf II} has a winning strategy in the game $G(X)$ \cite{Ox}.

It is clear that any Choquet space is a space with the Baire property.

The strong Choquet game $G^s(X)$ of a topological space $X$ \cite{Kechris} is a game between two players $\alpha$ and $\beta$ similar to $G(X)$. The player $\beta$ starts a play by
selection a pair $(x_0, U_0)$, where $U_0$ is an open subset of $X$ and $x_0\in U_0$. Then $\alpha$ must play a nonempty open subset $V_0$ of $U_0$ with $x_0\in U_0$.

In $n$th move, the player $\beta$ picks a pair $(x_n, U_n)$, where $U_n$ is an open subset of $V_{n-1}$, the previous move of $\alpha$-player, and $x_n\in U_n$. Then $\alpha$ answers by selecting an open set $V_n\subset U_n$ with $x_n\in V_n$. The player $\alpha$ wins the play $(U_i, V_i)_{i\in \omega}$, if $\bigcap_{n\in \omega} V_n\neq \emptyset$. Otherwise the player $\beta$ is said to have won the play.

We say that the player $\alpha$ has a winning strategy for the game $G^s(X)$ if there exists
a strategy $s$, such that $\alpha$ wins all plays provided that $\alpha$ acts according to
the strategy $s$. In this case, we say that $X$ is a {\it strong Choquet space}.

The class of strong Choquet spaces includes all countably base-compact spaces. A topological space $X$ is called {\it countably base-compact} if it has a base $\mathcal{B}$ of the topology such that for any countable centered subfamily $\mathcal{F}\subseteq \mathcal{B}$ the intersection $\bigcap_{F\in \mathcal{F}} \overline{F}$ is not empty. We recall that a family $\mathcal{F}$ of sets is called centered if any finite subfamily of $\mathcal{F}$ has nonempty intersection. The countable base-compactness is one of Amsterdam properties, discussed by Aarts and Lutzer in \cite{AL}.

\medskip
For topological spaces $X$ and $Y$ , we denote by $C_p(X,Y)$ the set $C(X,Y)$ of all continuous
functions from $X$ to $Y$, endowed with the topology of pointwise convergence. Denote by $C_p(X)=C_p(X,\mathbb{R})$.

Recall that a subset $A$ of a space $X$ is called a {\it zero-set} if $A=f^{-1}(0)$ for some $f\in C(X)$. A subset $A$ of a space $X$ is called a {\it cozero set} if $X\setminus A$ is a zero-set.

 A function $f : X \rightarrow Y$
from a topological space $X$ to a topological space $Y$ is called a Baire-one function  if $f$ is the pointwise limit of a sequence $\{f_n : n\in \omega\}\subset C(X, Y )$. Let $B_1(X, Y )$ denote the set of all Baire-one functions from a topological space $X$ to a topological space $Y$,
endowed with the topology of pointwise convergence. Denote by $B_1(X)=B_1(X,\mathbb{R})$.

\medskip

\medskip

A set $Y\subset X$ is called

$\bullet$ {\it $B_1$-embedded} in $X$ if for any $f\in B_1(Y)$ there exists $\tilde{f}\in B_1(X)$ such that $\tilde{f}|Y=f$;

$\bullet$ {\it $B^*_1$-embedded} in $X$ if for any bounded  $f\in B_1(Y)$ there exists $\tilde{f}\in B_1(X)$ such that $\tilde{f}|Y=f$.

\medskip

Let us recall (\cite{Arh1}, \S 6.1)
that a topological space is Moscow if the closure of every open subset is a union of $G_{\delta}$-sets. By (\cite{Arh1}, p. 347)
the Tychonoff product of first-countable space is Moscow and every dense subspace of a Moscow space is
Moscow.

\begin{theorem}\label{th11}(Theorem 3.8 in \cite{Os2}) For a Tychonoff space $X$ the following assertions are equivalent.

(1)   $B_1(X,[0,1])$ is a Choquet space.

(2)   Every countable subset of $X$ is $B^*_1$-embedded.

(3) $B_1(X,[0,1])$ is pseudocompact.

(4)  $X$ is a weak $\lambda$-space.

\end{theorem}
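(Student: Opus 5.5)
We prove the cycle of implications $(4)\Rightarrow(2)\Rightarrow(3)\Rightarrow(1)\Rightarrow(4)$. Throughout we use that $B_1(X,[0,1])$ is a dense subspace of $[0,1]^X$, since it contains $C_p(X,[0,1])$, which separates finitely many points of the Tychonoff space $X$. We also use that for a countable $Y\subseteq X$ every function $Y\to[0,1]$ can be uniformly approximated by finite-valued functions.

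\textbf{$(4)\Rightarrow(2)$.} Let $Y\subseteq X$ be countable and $f\in B_1(Y)$ bounded, say with values in $[0,1]$.

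First we reduce to characteristic functions. A Baire-one function on $Y$ is a uniform limit of finite sums $\sum c_i\chi_{A_i}$, where each $A_i\subseteq Y$ is ambiguous, i.e. $G_\delta$-$F_\sigma$ in $Y$. Since $Y$ is countable, every subset $A\subseteq Y$ together with its complement $Y\setminus A$ forms a pair of disjoint countable sets. The weak $\lambda$-property of $X$ then gives a $G_\delta$-$F_\sigma$ set $H\subseteq X$ with $A\subseteq H$ and $(Y\setminus A)\cap H=\emptyset$. The function $\chi_H$ is Baire-one on $X$, because $X$ is Tychonoff and ambiguous sets are handled via zero-set approximations; if needed we choose $H$ to be a countable Boolean combination of zero-sets. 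This $\chi_H$ extends $\chi_A$.

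Next we pass to the limit. Write $f=\sum_n 2^{-n}\chi_{A_n}$, using binary digits of a suitably chosen function, or more carefully $f$ as a uniform limit of finite-valued functions. Take extensions $\chi_{H_n}$ of the $\chi_{A_n}$. Then $\sum 2^{-n}\chi_{H_n}$ is a uniform limit of Baire-one functions, hence Baire-one, and it extends $f$.

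\textbf{$(2)\Rightarrow(3)$.} Suppose $B_1(X,[0,1])$ is not pseudocompact. Then there is a discrete (locally finite) sequence of basic open sets $U_n$, supported on finite sets $F_n$. Put $Y=\bigcup F_n$, which is countable. Each $U_n$ restricts to a nonempty open set $U_n|Y$ in $[0,1]^Y$, and these live in the compact space $[0,1]^Y$. Choose a cluster point $g\in[0,1]^Y$ of points $g_n\in U_n|Y$ such that $g$ is Baire-one on $Y$. This is achievable by making the $g_n$ finitely supported, with values constant off finite sets, and using a diagonal argument on the countable $Y$ to get pointwise convergence along a subsequence to some $g$; a pointwise limit of such functions on a countable set is arbitrary, and every function on a countable space is Baire-one when points are $G_\delta$ relative to the test. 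Extend $g$ to $\tilde g\in B_1(X,[0,1])$ by (2). Then every neighbourhood of $\tilde g$ meets infinitely many $U_n$, contradicting local finiteness. The delicate point here is ensuring that the cluster point $g$ lies in $B_1(Y,[0,1])$; I expect this to need the choice of the $g_n$ from $C_p$-type functions and a refinement of the subsequence.

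\textbf{$(3)\Rightarrow(1)$.} A pseudocompact dense subspace of $[0,1]^X$ is $G_\delta$-dense. Player II in the Banach--Mazur game then wins by playing basic sets determined by countably many coordinates; the intersection of the closures in $[0,1]^X$ is a nonempty $G_\delta$ of the product, which $G_\delta$-density forces to meet $B_1(X,[0,1])$. The product of first-countable spaces is Moscow, which makes closures behave well.

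\textbf{$(1)\Rightarrow(4)$.} This is the main obstacle. Given disjoint countable sets $Y,Z$, run the game with Player I using the strategy that successively fixes coordinates on $Y\cup Z$ near $1$ on $Y$ and near $0$ on $Z$. Player II's winning strategy produces $h\in B_1(X,[0,1])$ with $h>1/2$ on $Y$ and $h<1/2$ on $Z$. Then $H=h^{-1}((1/2,1])$ is $F_\sigma$, but not necessarily $G_\delta$. To fix this, we use two thresholds together with a telescoping construction, or equivalently take $H=\{h\geq 2/3\}\cup\ldots$. The cleanest route is to show that the functions produced can be forced to take only the values $0$ and $1$ on $Y\cup Z$, which yields $H=h^{-1}(1)$, a $G_\delta$, with complement handled symmetrically. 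I would try first to use an $F_\sigma$ set $\{h>1/2\}$ together with the $G_\delta$ set $\{h\ge 1/2\}$ and interpolate between them using the standard fact that disjoint $G_\delta$ and $F_\sigma$ sets arising from Baire-one functions can be separated by an ambiguous set.
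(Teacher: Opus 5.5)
The paper does not prove this theorem; it is quoted from \cite{Os2}. Your proposal therefore has to stand on its own, and it fails at $(1)\Rightarrow(4)$.

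\textbf{The gap in $(1)\Rightarrow(4)$.} Player I cannot ``successively fix coordinates on $Y\cup Z$''. Player II moves in between and may already have imposed, say, $h(y)<1/4$ for some $y\in Y$ that I has not yet treated. After that, no legal move of I can force $h(y)$ near $1$. A winning strategy for II only guarantees $\bigcap_n V_n\neq\emptyset$; it never guarantees prescribed values.

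In fact your argument uses only that $B_1(X,[0,1])$ is a dense Choquet subspace of $[0,1]^X$, and for such subspaces the conclusion is false. Take $E=\{h\in[0,1]^\omega: h(n)<1/2 \text{ for infinitely many } n\}$. It is a dense $G_\delta$ in $[0,1]^\omega$, hence Polish and Choquet, yet no $h\in E$ satisfies $h>1/2$ on $Y=\omega$.

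Even if the game did produce such an $h$, it would not suffice. On $X=\mathbb{R}$, for any disjoint countable $Y=\{y_k\}$, $Z=\{z_k\}$, define $h$ to be $1/2+2^{-k-1}$ at $y_k$, $1/2-2^{-k-1}$ at $z_k$, and $1/2$ elsewhere. This $h$ is Baire-one, being a uniform limit of Baire-one functions. Yet $Y=\mathbb{Q}$ and $Z=\mathbb{Q}+\sqrt2$ admit no $G_\delta$-$F_\sigma$ separator, since the separator and its complement would both be dense $G_\delta$ sets. The same example refutes the separation principle you invoke: $\{h>1/2\}=\mathbb{Q}$ and $\{h\le 1/2\}=\mathbb{R}\setminus\mathbb{Q}$ are a disjoint $F_\sigma$ and $G_\delta$ coming from a Baire-one function.

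\textbf{How to close it.} You need exact values on $Y\cup Z$, and getting them requires the group structure. The paper's argument for Theorem \ref{th6}, $(1)\Rightarrow(7)$, works verbatim with $B_1$ in place of $C_p$.
\begin{enumerate}
\item Let $\varphi\colon[0,1]\to\mathbb{S}^1$ be the endpoint-identifying map. By Lemma \ref{lem1}, $\varphi^X(B_1(X,[0,1]))$ is a dense Choquet subspace of the topological group $B_1(X,\mathbb{S}^1)$, so that group is Choquet.
\item By \cite{BH}, $B_1(X,\mathbb{S}^1)$ is then $G_\delta$-dense in $(\mathbb{S}^1)^X$, hence $\omega$-full.
\item Pick $\tilde f\in B_1(X,\mathbb{S}^1)$ with $\tilde f(Y)=\{a\}$ and $\tilde f(Z)=\{b\}$. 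With a continuous $\psi\colon\mathbb{S}^1\to[0,1]$ satisfying $\psi(a)=1$, $\psi(b)=0$, the function $g=\psi\circ\tilde f$ is Baire-one, $g|Y=1$ and $g|Z=0$.
\item The sets $\{g>1/3\}$ and $\{g<2/3\}$ cover $X$ and are countable unions of zero-sets. Reducing this cover to a partition $X=P^*\cup Q^*$ (as in the next paragraph) gives a $G_\delta$-$F_\sigma$ set $P^*\supseteq Y$ missing $Z$.
\end{enumerate}

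\textbf{Repairs to the other steps.} In $(4)\Rightarrow(2)$, $\chi_H\in B_1(X)$ forces $H$ and $X\setminus H$ to be countable unions of zero-sets. The weak $\lambda$-property only yields a $G_\delta$-$F_\sigma$ set, so ``choose $H$ to be a Boolean combination of zero-sets'' must be proved. It follows from countability:
\begin{itemize}
\item Write $H=\bigcup_n F_n$ and $X\setminus H=\bigcup_n G_n$ with $F_n,G_n$ closed.
\item Let $Y\setminus A=\{c_k\}$, and take continuous $u_{n,k}\colon X\to[0,1]$ with $u_{n,k}(c_k)=1$ and $u_{n,k}|F_n=0$. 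The zero-set $K_n$ of $\sum_k 2^{-k}u_{n,k}$ contains $F_n$ and misses $Y\setminus A$.
\item In the same way, $G_n\subseteq L_n$ for a zero-set $L_n$ missing $A$.
\item Put $P^*=\bigcup_n\bigl(K_n\setminus\bigcup_{m<n}L_m\bigr)$ and $Q^*=\bigcup_n\bigl(L_n\setminus\bigcup_{m\le n}K_m\bigr)$. These partition $X$ into countable unions of zero-sets, with $A\subseteq P^*$ and $Y\setminus A\subseteq Q^*$.
\end{itemize}
So $\chi_{P^*}$ is a Baire-one extension of $\chi_A$, and your binary-digit summation then finishes the step.

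In $(2)\Rightarrow(3)$ the ``delicate point'' is empty. A countable Tychonoff space $Y$ is perfectly normal and every subset of it is $F_\sigma$, so $B_1(Y,[0,1])=[0,1]^Y$ and any cluster point $g$ works. You should compose the extension of $g$ with a retraction $\mathbb{R}\to[0,1]$.

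In $(3)\Rightarrow(1)$, $G_\delta$-density holds because, for countable $A$, $p_A(B_1(X,[0,1]))$ is pseudocompact and metrizable, hence compact, and dense in $[0,1]^A$. With that, your strategy for II (choosing boxes whose closures lie inside the previous move) is correct, and the Moscow remark is not needed.
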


A family $\mathcal{F}\subseteq Y^{X}$ of functions from a set $X$ to a set $Y$ is called {\it $\omega$-full in $Y^X$} if each $f:Z\rightarrow Y$ defined on a countable subset $Z\subseteq X$ has an extension $\overline{f}\in \mathcal{F}.$

\begin{theorem}\label{th1} (Theorems 3.6 and 3.7 in \cite{Os2}) For any space $X$ the following assertions are equivalent:

(1)   $B_1(X)$ is a Choquet space;

(2)   Every countable subset of $X$ is $B_1$-embedded;

(3)  $X$ is a $\Delta_1$-space;

(4)  $B_1(X)$ is pseudocomplete;

(5) $B_1(X)$ is a $G_{\delta}$-dense subset in $\mathbb{R}^X$;

(6) $B_1(X)$ is $\omega$-full in $\mathbb{R}^X$.

\end{theorem}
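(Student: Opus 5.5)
We prove the equivalences by a cycle, treating the conditions on $X$ as the hub: (3)$\Leftrightarrow$(2) is combinatorial, and (2)$\Leftrightarrow$(6)$\Leftrightarrow$(5)$\Rightarrow$(4)$\Rightarrow$(1)$\Rightarrow$(6) handle the function space.

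\emph{(2)$\Leftrightarrow$(6).} For $Z\subseteq X$ countable, every function $Z\to\mathbb{R}$ is Baire-one on $Z$. This holds because $Z$ is countable and Tychonoff, hence zero-dimensional-like enough: every subset of $Z$ is $F_\sigma$ and $G_\delta$, so every function on $Z$ is of the first Baire class. Hence $B_1$-embeddedness of all countable $Z$ is exactly $\omega$-fullness.

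\emph{(6)$\Leftrightarrow$(5).} A nonempty $G_\delta$ subset of $\mathbb{R}^X$ contains a set of the form $\{g: g|Z=h|Z\}$ for some countable $Z\subseteq X$ and some $h$. Thus $G_\delta$-density is equivalent to every function on a countable $Z$ extending to a member of $B_1(X)$.

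\emph{(5)$\Rightarrow$(4).} A $G_\delta$-dense subspace $D$ of $\mathbb{R}^X$ is pseudocomplete. For $\pi$-bases take the traces on $D$ of basic boxes with bounded rational intervals; call them $\mathcal{B}_n=\mathcal{B}$ for all $n$. Given $U_n\in\mathcal{B}$ with $\overline{U_{n+1}}\subseteq U_n$, the supports together form a countable $Z$. The closures of the boxes intersect in $\mathbb{R}^Z$ in a nonempty compact set, which contains a point lying in all the open boxes, by the nested closure condition. $G_\delta$-density then gives an element of $D$ realizing it.

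\emph{(4)$\Rightarrow$(1).} Pseudocomplete spaces are Choquet, by the standard strategy for II of choosing inside $\mathcal{B}_n$ with closure in the previous set.

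\emph{(1)$\Rightarrow$(6).} This is the main obstacle. Suppose $h:Z\to\mathbb{R}$ on countable $Z=\{z_k\}$ has no Baire-one extension. We build a winning strategy for player I, contradicting the Baire property implied by Choquet. The idea is for I to force successive coordinates $z_k$ to within $1/k$ of $h(z_k)$, so that any $f$ in the intersection satisfies $f|Z=h$, which is impossible. But II may shrink arbitrarily, so the nested sets could fix $f(z_k)$ elsewhere. Therefore I instead plays boxes containing the values $h(z_k)$ while II controls finitely many other coordinates. The intersection is then an $f\in B_1(X)$ whose restriction to $Z$ must agree with $h$ only on a cofinite-in-the-game subset.

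The cleaner route is to work with the known dense subspace $C_p$-like approximations and the $\Delta_1$ characterization directly. Assume (1) and show (3): given disjoint countable $A_n$, consider $h$ equal to $n$ on $A_n$. Player I, using nested open sets, forces a function $f\in B_1(X)$ with $f\geq n$ on $A_n$ for cofinitely many coordinates handled. Then $U_n=\{f>n-1\}$ is point-finite, but these sets are only $F_\sigma$/$G_\delta$. Refining them to open sets via Baire-one approximants $f_m$ and the standard $\Delta_1$ argument of \cite{KKL} gives the required expansion. I expect the precise bookkeeping in this step, which is to ensure the game yields a genuine extension and then an open point-finite expansion, to be the hardest part. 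I would try first to mimic the proof of Theorem~\ref{th11}, replacing $[0,1]$ by $\mathbb{R}$.

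\emph{(3)$\Rightarrow$(2).} Given $h$ on countable $Z$, write $h$ as a uniform limit of countably-valued step functions. Use the point-finite open expansions of the level sets to build locally finite-in-points continuous partitions, via Tychonoff functions, and take pointwise limits. The reverse direction, (2)$\Rightarrow$(3), follows by extending $n\mapsto A_n$ to $f\in B_1(X)$, taking $U_n=\{|f_m-n|<1/3 \text{ for some } m\ge n\}$, and checking point-finiteness using convergence $f_m(x)\to f(x)$.
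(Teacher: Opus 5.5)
\emph{The paper gives no proof of this statement; it imports it from \cite{Os2}. So I judge your proposal on its own terms.}

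Several of your arrows are sound: (2)$\Leftrightarrow$(6)$\Leftrightarrow$(5)$\Rightarrow$(4)$\Rightarrow$(1) and (2)$\Rightarrow$(3). In particular, the open sets $U_n=\bigcup_{m\ge n}\{x:|f_m(x)-n|<1/3\}$ do form a point-finite expansion.

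The genuine gap is (1)$\Rightarrow$(6). This is the only arrow leaving $\{(1),(4)\}$, so as written (1) and (4) are not shown to imply anything.

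Your first plan uses only the Baire property, and it stalls for the reason you give yourself. II may pre-empt coordinates of $Z$ with values of its own choosing, so I can force $f\approx h$ only on a set of coordinates that II partly controls. Non-extendability of $h$ need not survive passage to such a set. Note also that an argument of this type would prove the stronger claim ``$B_1(X)$ Baire $\Rightarrow$ $\omega$-full'', whose $C_p$-analogue is false.

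Your second plan does not explain how a play produces $f$ with $f\ge n$ on \emph{all} of $A_n$. Control over ``cofinitely many coordinates handled'' only expands pieces of the $A_n$. Saying ``mimic Theorem \ref{th11}'' is not an argument, since that theorem is also only quoted.

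The missing idea is to exploit II's winning strategy through the group structure of $B_1(X)\subseteq\mathbb{R}^X$, not through coordinate bookkeeping. Fix a countable $Z\subseteq X$.
\begin{enumerate}
\item Since $p_Z\colon\mathbb{R}^X\to\mathbb{R}^Z$ is open and $B_1(X)$ is dense, Lemma \ref{lem1} makes $H=p_Z(B_1(X))$ a dense Choquet subgroup of the Polish group $\mathbb{R}^Z$.
\item By Oxtoby's theorem (see \cite{Kechris}), a metrizable Choquet space is comeager in its completion. So for any $h\in\mathbb{R}^Z$, both $H$ and its translate $h+H$ are comeager in $\mathbb{R}^Z$.
\item Hence they meet, so $h\in H-H=H$. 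This is (6).
\end{enumerate}
Equivalently, the Banakh--Hryniv theorem \cite{BH} used in the proof of Theorem \ref{th6} gives (5) directly. Apply it to the dense subgroup $B_1(X)$ of its Raikov completion $\mathbb{R}^X$. This coset device is exactly what the paper uses in Theorems \ref{th2}, \ref{th3} and \ref{th6}.

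Your (3)$\Rightarrow$(2) is also only gestured at. Point-finite open families do not yield continuous partitions of unity, and no uniform approximation is needed. A complete argument runs as follows.
\begin{enumerate}
\item Enumerate $Z=\{z_k:k\ge 1\}$ and apply the $\Delta_1$-property to the singletons $\{z_k\}$. This gives a point-finite open family $\{W_k\}$ with $z_k\in W_k$.
\item Choose continuous $\phi_k\colon X\to[0,1]$ with $\phi_k(z_k)=1$, $\phi_k=0$ off $W_k$, and $\phi_k(z_j)=0$ for $j<k$.
\item Put $g_0=0$ and $g_k=g_{k-1}+(h(z_k)-g_{k-1}(z_k))\phi_k$. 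Each $g_k$ is continuous, and $g_k(z_j)=h(z_j)$ for $j\le k$.
\item At each $x\in X$, the sequence $(g_k(x))$ changes only for the finitely many $k$ with $x\in W_k$. So it is eventually constant, and $g=\lim_k g_k$ is a Baire-one extension of $h$.
\end{enumerate}
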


The rest of our notation is standard and follows the book \cite{Eng}. The books  \cite{Tkachuk2} and \cite{Tk14} contain all
necessary facts and notions of $C_p$-theory.

\section{Properties of spaces of Baire-one functions}

We prove that the pseudocompleteness of the space $B_1(X)$ is equivalent to the following additional conditions.

\begin{theorem}\label{th2} For any space $X$ the following assertions are equivalent:

(1)  $B_1(X)$ is pseudocomplete;

(2) $B_1(X)$ is cofinally Polish;

(3) $B_1(X)$ has a dense cofinally Polish subspace;

(4) $B_1(X)$ is a strong Choquet space;

(5) $B_1(X)$ is countably base-compact.

\end{theorem}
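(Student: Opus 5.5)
Each of the five conditions can be tied to the combinatorial condition ``$X$ is a $\Delta_1$-space'', equivalently (by Theorem \ref{th1}) ``$B_1(X)$ is $\omega$-full in $\mathbb{R}^X$''. The plan is to prove the chain $(1)\Rightarrow(5)\Rightarrow(4)\Rightarrow(1)$ and $(1)\Rightarrow(2)\Rightarrow(3)\Rightarrow(1)$.

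\textbf{Step 1: $(1)\Rightarrow(5)$.} By Theorem \ref{th1}, (1) gives $\omega$-fullness. Take the base $\mathcal{B}$ of $B_1(X)$ made of standard boxes
$$[x_1,\dots,x_k;I_1,\dots,I_k]=\{f\in B_1(X): f(x_i)\in I_i,\ i\le k\},$$
with $I_i$ open bounded intervals. Let $\{B_n\}\subseteq\mathcal{B}$ be countable and centered, and let $Z$ be the countable set of all points appearing in the $B_n$.
\begin{itemize}
\item For each $z\in Z$, the closed intervals $\overline{I}$ constrained at $z$ pairwise intersect (centeredness), so they have a common point $r_z$ by compactness (Helly in dimension one).
\item By $\omega$-fullness there is $f\in B_1(X)$ with $f|Z=(r_z)_{z\in Z}$.
\item The closure of a box in $B_1(X)$ contains the functions whose values at the $x_i$ lie in $\overline{I_i}$. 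Hence $f\in\bigcap_n\overline{B_n}$.
\end{itemize}

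\textbf{Step 2: $(5)\Rightarrow(4)\Rightarrow(1)$.} These are general facts. For $(5)\Rightarrow(4)$: countably base-compact spaces are strong Choquet, as noted in Section 2. Player $\alpha$ answers $(x_n,U_n)$ with a base element $V_n\ni x_n$ satisfying $\overline{V_n}\subseteq U_n$ (regularity). Then $\{V_n\}$ is decreasing, hence centered, so $\bigcap V_n=\bigcap\overline{V_n}\neq\emptyset$.

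For $(4)\Rightarrow(1)$, I would avoid general theory and go back to $\Delta_1$. Suppose $X$ is not $\Delta_1$. By Theorem \ref{th1}, some countable $Z=\{z_k\}$ carries a function $g:Z\to\mathbb{R}$ with no Baire-one extension. Then $\beta$ plays points of $B_1(X)$ and boxes forcing values near $g(z_k)$ at $z_0,\dots,z_n$, within radius $2^{-n}$. Any $f\in\bigcap V_n$ would extend $g$, which is impossible.

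The subtle point is that $\alpha$ may shrink the sets, while $\beta$ must keep nonempty boxes consistent with $g$. This works because $\beta$ chooses the point $x_n$ inside $\alpha$'s set before demanding the next constraint. Since $C_p$-functions realize any finite values, $\beta$ can pick $x_n\in V_{n-1}$ with $x_n(z_j)$ close to $g(z_j)$ for $j\le n$. The real issue is that $V_{n-1}$ was chosen by $\alpha$ and may constrain additional coordinates incompatibly with $g$. To handle this, $\beta$ should enumerate all coordinates mentioned so far and, via a diagonal bookkeeping, aim for a limit function on the growing countable set $Z'$. Non-extendability must then be arranged for a function on $Z'$, which seems hard. \emph{Alternative, which I expect to be needed:} derive (1) from strong Choquet via the known result that a strong Choquet space is pseudocomplete when, as here, the space is a dense subspace of a product of metrizable spaces (Moscow, cf. Section 2). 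More directly, show that the countable sets $Z'$ arising can be taken to contain $Z$, and use that a function on $Z'$ which is Baire-one-extendable restricts to a Baire-one-extendable function on $Z$. With $\beta$ steering values on $Z$ toward $g$, any point of $\bigcap V_n$ restricts to $g$ on $Z$, a contradiction. So $\beta$ only needs to steer $Z$-coordinates, and its freedom to pick $x_n\in V_{n-1}$ is exactly what lets it avoid $\alpha$'s constraints. This is the main obstacle, and I would carry it out in this form.

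\textbf{Step 3: $(1)\Rightarrow(2)\Rightarrow(3)\Rightarrow(1)$.} For $(1)\Rightarrow(2)$: given continuous $f:B_1(X)\to M$ onto a second countable $M$, the map factors through a countable set of coordinates, $f=\varphi\circ\pi_A$ with $A\subseteq X$ countable. This holds because $B_1(X)$ is $G_\delta$-dense in $\mathbb{R}^X$ (Theorem \ref{th1}(5)), and continuous maps from $G_\delta$-dense subspaces of products into second countable spaces depend on countably many coordinates. By $\omega$-fullness, $\pi_A(B_1(X))=\mathbb{R}^A$, which is Polish. Take $P=\mathbb{R}^A$, $g=\pi_A$, $h=\varphi$.

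$(2)\Rightarrow(3)$ is trivial.

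For $(3)\Rightarrow(1)$, let $D$ be dense and cofinally Polish, and suppose $g:Z\to\mathbb{R}$ on a countable $Z$ is not Baire-one-extendable. Apply cofinal Polishness to $\pi_Z|D$ onto its image $M\subseteq\mathbb{R}^Z$, which is dense, to get a Polish $P$ and maps $D\to P\to M$. Then $M$ is a continuous image of a Polish space, hence analytic, and more importantly Choquet-type. Comparing with $\pi_Z(B_1(X))\ni$ all Baire-extendable functions, I would show $M$ must be all of $\mathbb{R}^Z$, using the strong Choquet game argument of Step 2 transferred through $P$. This gives a contradiction.
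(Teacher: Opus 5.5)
Your $(1)\Rightarrow(5)$ (via $\omega$-fullness and closures of boxes, which is more elementary than the paper's citation of Banakh--Gabriyelyan), your $(5)\Rightarrow(4)$ and your $(1)\Rightarrow(2)$ are fine. The two implications that carry the content, $(4)\Rightarrow(1)$ and $(3)\Rightarrow(1)$, are not proved.

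In $(4)\Rightarrow(1)$, your steering argument uses only two facts: $B_1(X)$ is dense in $\mathbb{R}^X$, and some $g\in\mathbb{R}^Z$ is not a restriction of its members. In that generality the conclusion is false. The space $Y=\mathbb{R}^\omega\setminus\{0\}$ is Polish, hence strong Choquet, and dense in $\mathbb{R}^\omega$, yet $0\notin Y$. Your proposed fix does not remove the obstacle you flagged. $\alpha$ answers $(x_n,U_n)$ with an arbitrarily small neighbourhood of $x_n$. Since no admissible $x_n$ agrees with $g$ on all of $Z$, $\alpha$ may pin some coordinate $z_k$ near $x_n(z_k)\neq g(z_k)$. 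No later choice of $x_m\in V_{m-1}$ undoes a constraint already imposed, so ``every point of $\bigcap V_n$ restricts to $g$'' is never secured. The repair is one line. A winning strategy for $\alpha$ in $G^s$ gives player II a winning strategy in $G$: II picks any point of I's set and feeds it to the strategy. So strong Choquet implies Choquet, and Theorem~\ref{th1} $(1)\Rightarrow(4)$ yields pseudocompleteness.

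In $(3)\Rightarrow(1)$ there are three problems. First, ``continuous image of a Polish space, hence Choquet-type'' is unjustified: $\mathbb{Q}$ is a continuous image of the discrete space $\omega$. Second, the appeal to the Step 2 game inherits its flaw. Third, the target $M=\mathbb{R}^Z$ is wrong: for $X=\omega$ discrete, $D=\mathbb{R}^\omega\setminus\{0\}$ is a dense Polish subspace of $B_1(X)=\mathbb{R}^\omega$ with $\pi_\omega(D)\neq\mathbb{R}^\omega$.

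What must be shown is $\pi_Z(B_1(X))=\mathbb{R}^Z$, and two ideas are missing.
\begin{enumerate}
\item Largeness of $M$, coming from density of $D$ in the product. Write $\pi_Z|D=h\circ q$ with $q:D\to P$ and $h:P\to M$. For nonempty open $O\subseteq P$, take a nonempty box $W$ with $D\cap W\subseteq q^{-1}(O)$; then $h(O)\supseteq\pi_Z(D\cap W)$, and this set is dense in the open set $\pi_Z(W)$. The paper packages this as Tkachuk's Theorem 3.3: $\pi_Z(D)$ contains a dense Polish subspace $E$.
\item The group structure. $\pi_Z(B_1(X))$ is a subgroup containing $E$. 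Hence $E$ and $g+E$ would be disjoint dense $G_\delta$ subsets of $\mathbb{R}^Z$, contradicting the Baire category theorem.
\end{enumerate}
Alternatively, the first point together with the Baire property of $P$ shows the analytic set $M$ is non-meager. Pettis' theorem then makes the subgroup $\pi_Z(B_1(X))\supseteq M-M$ open, hence equal to $\mathbb{R}^Z$.
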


\begin{proof} By Theorem 2.21 in \cite{BG} and Theorem 4.5 in \cite{Osip1}, it holds that (1) is equivalent to (4). $(2)\Rightarrow (3)$ is trivial.

$(3)\Rightarrow (1)$. This implication is proved analogously to the proof of Theorem 3.4 in \cite{Tkachuk} for $C_p$-spaces. Assume that $Z$ is a dense cofinally Polish subspace of the space $B_1(X)$. Take any countable set $A\subset X$ and observe that $Z$ is  dense in $\mathbb{R}^X$ so we can apply Theorem 3.3 in \cite{Tkachuk} to see that $p_A(Z)$ has a dense Polish subspace $P$, where $p_A: \mathbb{R}^X\rightarrow \mathbb{R}^A$ is the projection.

Observe that $p_A(Z)$ is dense $B_1(A|X)$ and assume that $B_1(A|X)\neq \mathbb{R}^A$. Pick any function $f\in \mathbb{R}^A\setminus B_1(A|X)$ and note that $B_1(A|X)$ does not meet its coset $B_1(A|X)+f$. As a consequence, $p_A(Z)$ and $p_A(Z)+f$ are two dense disjoint \v{C}ech-complete subspaces of $\mathbb{R}^A$
which is impossible by Problem 264 of \cite{Tkachuk2}. This contradiction shows that  $B_1(A|X)=\mathbb{R}^A$
for any countable set $A\subset X$ or, in other words, that every countable subset of $X$ is $B_1$-embedded. By Theorem \ref{th1}, $B_1(X)$ is pseudocomplete.

$(1)\Rightarrow (2)$. Given any continuous onto map $\varphi: B_1(X)\rightarrow M$ for some second countable space $M$, there exists a countable set $A\subset X$ and a continuous map $\psi: p_A(B_1(X))\rightarrow M$ such that $\psi\circ p_A=\varphi$ (see \cite{Tkachuk2}, Problem 299). All countable subsets of $X$ are $B_1$-embedded by Theorem \ref{th1} so $P=p_A(B_1(X))=\mathbb{R}^A$ is a Polish space which witnesses that $B_1(X)$ is cofinally Polish.

Since $B_1(X)$ is dense in $\mathbb{R}^X$, $B_1(X)$ is strong Choquet if and only if it is countably base-compact (see Theorem 2.21 in \cite{BG}).
\end{proof}

Note that if $X$ has a dense pseudocomplete subspace (in particular, if $X$ has a dense
\v{C}ech-complete subspace) then $X$ is a pseudocomplete space (\cite{Tkachuk2}, p. 47).

\begin{corollary} {\it $B_1(X)$ has a dense cofinally Polish subspace if and only if $X$ is a $\Delta_1$-space.}
\end{corollary}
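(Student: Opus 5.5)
The corollary follows by chaining Theorem \ref{th2} with Theorem \ref{th1}; no new construction is needed.

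For the forward direction, assume $B_1(X)$ has a dense cofinally Polish subspace. This is condition (3) of Theorem \ref{th2}. By the implication $(3)\Rightarrow(1)$ proved there, $B_1(X)$ is pseudocomplete. That is condition (4) of Theorem \ref{th1}, so the equivalence $(4)\Leftrightarrow(3)$ of Theorem \ref{th1} shows that $X$ is a $\Delta_1$-space.

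For the converse, assume $X$ is a $\Delta_1$-space. By Theorem \ref{th1}, $B_1(X)$ is pseudocomplete. The implication $(1)\Rightarrow(2)$ of Theorem \ref{th2} then makes $B_1(X)$ itself cofinally Polish. Since $B_1(X)$ is trivially dense in itself, it is the required dense cofinally Polish subspace.

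The only step with real content is $(3)\Rightarrow(1)$ of Theorem \ref{th2}. There, Tkachuk's Theorem 3.3 yields a dense Polish subspace of each countable projection $p_A(Z)$. The argument then rules out $B_1(A|X)\neq\mathbb{R}^A$, because that would produce two disjoint dense \v{C}ech-complete subsets of $\mathbb{R}^A$. This step is already established in the paper, so the corollary itself needs only the bookkeeping of the equivalences.
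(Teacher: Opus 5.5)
Your proposal is correct and matches the paper, which states this corollary without a separate proof as an immediate consequence of the equivalence $(1)\Leftrightarrow(3)$ in Theorem \ref{th2} combined with $(3)\Leftrightarrow(4)$ of Theorem \ref{th1}. Your handling of both directions, including using $B_1(X)$ itself as the dense subspace in the converse, is accurate.
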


\begin{corollary} {\it If $B_1(X)$ is cofinally Polish, then $B_1(X, [0,1])$ is also cofinally Polish.}
\end{corollary}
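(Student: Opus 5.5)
By Theorem \ref{th2}, the hypothesis that $B_1(X)$ is cofinally Polish is equivalent to $B_1(X)$ being pseudocomplete. By Theorem \ref{th1}, this is in turn equivalent to every countable subset of $X$ being $B_1$-embedded, i.e.\ to $X$ being a $\Delta_1$-space. The plan is to go from there to condition (2) of Theorem \ref{th11} and then imitate the proof of $(1)\Rightarrow(2)$ in Theorem \ref{th2} for the bounded space $B_1(X,[0,1])$.

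\emph{Step 1: countable sets are $B^*_1$-embedded for $[0,1]$-valued functions.} Let $A\subset X$ be countable and $f\in B_1(A,[0,1])$. Since $A$ is $B_1$-embedded, $f$ has an extension $\tilde f\in B_1(X)$. Compose with the retraction $r:\mathbb{R}\to[0,1]$, $r(t)=\min(1,\max(0,t))$. Then $r\circ\tilde f$ is the pointwise limit of the continuous functions $r\circ \tilde f_n$, where $\tilde f_n\to\tilde f$, so $r\circ\tilde f\in B_1(X,[0,1])$. It also extends $f$, because $f$ already takes values in $[0,1]$. Consequently $p_A(B_1(X,[0,1]))=[0,1]^A$ for every countable $A\subset X$. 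This also confirms condition (2) of Theorem \ref{th11}, so $X$ is a weak $\lambda$-space.

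\emph{Step 2: factorization.} Let $\varphi:B_1(X,[0,1])\to M$ be a continuous onto map with $M$ second countable. The space $B_1(X,[0,1])$ is dense in $[0,1]^X$ (it even contains all functions with finite support-type modifications of continuous ones, and more directly it is $\omega$-full by Step 1). Therefore the factorization result of \cite{Tkachuk2}, Problem 299, applies to dense subspaces of products of second countable spaces. It gives a countable $A\subset X$ and a continuous $\psi:p_A(B_1(X,[0,1]))\to M$ with $\psi\circ p_A=\varphi$. By Step 1, $P=p_A(B_1(X,[0,1]))=[0,1]^A$, which is compact metrizable and hence Polish. Taking $g=p_A$ and $h=\psi$, both continuous and onto, shows that $B_1(X,[0,1])$ is cofinally Polish.

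The only delicate point is checking that Problem 299 of \cite{Tkachuk2} applies to the subspace $B_1(X,[0,1])$ of $[0,1]^X$ rather than to $\mathbb{R}^X$. The cited fact says that every continuous map from a dense subspace of a product of second countable spaces into a second countable space depends on countably many coordinates. Density of $B_1(X,[0,1])$ in $[0,1]^X$ is immediate from $\omega$-fullness, so I expect this step to go through without difficulty. Step 1 is routine once the clipping map $r$ is used.
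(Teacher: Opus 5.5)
Your proof is correct, but it takes a different route from the paper's. The paper's proof has four steps:
\begin{itemize}
\item $B_1(X)$ cofinally Polish implies $B_1(X)$ pseudocomplete, by Theorem \ref{th2}.
\item Hence $B_1(X,[0,1])$ is pseudocomplete, since it has a dense pseudocomplete subspace (\cite{Tkachuk2}, p.~47); implicitly this is the copy $B_1(X,(0,1))\cong B_1(X)$.
\item Hence $B_1(X,[0,1])$ is pseudocompact, by Theorem 3.8 of \cite{Os2}.
\item Hence it is cofinally Polish, because every pseudocompact space is (Proposition 3.18 of \cite{NiTkYe}).
\end{itemize}

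You instead transplant the factorization argument of $(1)\Rightarrow(2)$ in Theorem \ref{th2} to the bounded space. The clipping retraction $r$ gives $p_A(B_1(X,[0,1]))=[0,1]^A$, and Problem 299 of \cite{Tkachuk2} does the rest.

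What your route buys:
\begin{itemize}
\item It is more self-contained. It avoids the transfer of pseudocompleteness, the pseudocompactness theorem from \cite{Os2}, and the general fact about pseudocompact spaces.
\item It exhibits an explicit compact metrizable factor $[0,1]^A$.
\item Clipping only requires extending bounded functions. So your argument really uses only that countable sets are $B^*_1$-embedded, condition (2) of Theorem \ref{th11}. It therefore proves directly that $B_1(X,[0,1])$ is cofinally Polish whenever $X$ is a weak $\lambda$-space, which is stronger than the corollary.
\end{itemize}
The paper's route, in exchange, yields pseudocompactness of $B_1(X,[0,1])$ along the way.

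Two small points to tidy up.
\begin{itemize}
\item The ``Consequently'' in Step 1 needs every $f\colon A\to[0,1]$ on a countable $A$ to lie in $B_1(A,[0,1])$. This is true: a countable Tychonoff space is zero-dimensional, so $f$ is a pointwise limit of finitely-valued locally constant functions into $[0,1]$. Alternatively, bypass it by citing $\omega$-fullness, Theorem \ref{th1}(6), directly. The paper makes this identification tacitly as well.
\item Your parenthetical about ``finite support-type modifications'' is unnecessary and unclear. Density of $B_1(X,[0,1])$ in $[0,1]^X$ follows simply because it contains $C_p(X,[0,1])$, which is dense in $[0,1]^X$.
\end{itemize}
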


\begin{proof} This is because $B_1(X)$ must be pseudocomplete by Theorem \ref{th2} and hence $B_1(X, [0,1])$ is pseudocomplete (\cite{Tkachuk2}, p. 47). By Theorem 3.8 in \cite{Os2}, $B_1(X, [0,1])$ is pseudocompact. Since every pseudocompact space is
cofinally Polish by Proposition 3.18 of \cite{NiTkYe}, the space $B_1(X,[0,1]$ must be cofinally Polish.
\end{proof}

\begin{corollary}{\it If $X$ is  a space with a countable pseudocharachter then $B_1(X)$ has a dense cofinally Polish subspace if and only if $X$ is a $\lambda$-space.}
\end{corollary}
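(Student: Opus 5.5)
The plan is to chain the preceding corollary with the known link between the $\Delta_1$-property and the $\lambda$-property under countable pseudocharacter. By the first corollary of this section (itself obtained from Theorem \ref{th2} together with Theorem \ref{th1}), $B_1(X)$ has a dense cofinally Polish subspace if and only if $X$ is a $\Delta_1$-space. It therefore suffices to show that, for a space $X$ of countable pseudocharacter, $X$ is a $\Delta_1$-space if and only if $X$ is a $\lambda$-space.

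\emph{Sufficiency.} If $X$ is a $\lambda$-space, then, as noted in Section 2, $X$ is a $\Delta_1$-space. This step needs no hypothesis on pseudocharacter. To see it directly, take a disjoint sequence $\{A_n\}$ of countable sets. The union $A=\bigcup_{k} A_k$ is countable, so each set $A\setminus A_n=\bigcup_{k\neq n}A_k$ is countable and hence $G_\delta$. Therefore $A_n$ is an $F_\sigma$ set relative to $A$. One then builds open sets $U_n\supseteq A_n$ with $U_n\cap \bigcup_{k<n}A_k=\emptyset$, choosing them carefully so that the family is point-finite. The details are in \cite{KKL}, and here we simply cite the remark already made in the paper.

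\emph{Necessity.} If $X$ is a $\Delta_1$-space with countable pseudocharacter, then Theorem 2.19 of \cite{KKL}, quoted in Section 2, shows that $X$ is a $\lambda$-space. If one wanted a self-contained argument, the idea would be as follows. Let $A=\{a_n\}$ be countable and fix $x\notin A$. Write $\{x\}=\bigcap_m W_m$ with the $W_m$ open. One would apply the $\Delta_1$-property to a suitable disjoint sequence of countable sets mixing points of $A$ with $x$, so that point-finiteness yields open sets whose intersection is exactly $A$. This is the only nontrivial step, and I would rely on the citation rather than reprove it.

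Combining the two directions with the first corollary gives the stated equivalence. The main obstacle is the reverse direction, $\Delta_1$ implies $\lambda$; it is handled entirely by the quoted Theorem 2.19 of \cite{KKL}.
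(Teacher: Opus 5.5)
Your proposal is correct and is exactly the paper's (implicit) argument: the preceding corollary reduces the statement to the equivalence of the $\Delta_1$- and $\lambda$-properties under countable pseudocharacter, which is the pair of facts quoted in Section~2 (every $\lambda$-space is a $\Delta_1$-space, and Theorem~2.19 of \cite{KKL}). One caution about your non-load-bearing aside: open sets $U_n\supseteq A_n$ with $U_n\cap\bigcup_{k<n}A_k=\emptyset$ need not exist (in the $\lambda$-space $\mathbb{Q}$ take $A_0=\{1/m: m\geq 1\}$ and $A_1=\{0\}$); the standard construction instead writes $\bigcup_k A_k=\{x_i: i\in\omega\}=\bigcap_n W_n$ with the $W_n$ open and decreasing, and puts $U_n=W_n\setminus(\{x_0,\dots,x_{n-1}\}\setminus A_n)$, but since you rest that step on the citation, nothing in your proof breaks.
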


\begin{theorem}\label{th3} For any space $X$ the following assertions are equivalent:

(1)  $B_1(X,[0,1])$ is pseudocomplete;

(2) $B_1(X,[0,1])$ is cofinally Polish;

(3) $B_1(X,[0,1])$ has a dense cofinally Polish subspace;

(4) $B_1(X,[0,1])$ is pseudocompact;

(5) $B_1(X,[0,1])$ is a strong Choquet space;

(6) $B_1(X,[0,1])$ is countably base-compact.

\end{theorem}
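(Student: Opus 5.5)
We follow the scheme of the proof of Theorem \ref{th2}, replacing $\mathbb{R}^X$ by $[0,1]^X$ and $B_1$-embeddedness by $B^*_1$-embeddedness. The pivot is Theorem \ref{th11}: conditions (4) and ``$B_1(X,[0,1])$ is Choquet'' are each equivalent to every countable $A\subset X$ being $B^*_1$-embedded, i.e. to $p_A(B_1(X,[0,1]))=[0,1]^A$ for every countable $A$. We prove $(4)\Rightarrow(2)\Rightarrow(3)\Rightarrow(4)$, $(4)\Rightarrow(1)\Rightarrow(4)$, and $(1)\Leftrightarrow(5)\Leftrightarrow(6)$.

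$(4)\Rightarrow(2)$ follows from Proposition 3.18 of \cite{NiTkYe}, since every pseudocompact space is cofinally Polish. Alternatively, we can argue directly as in Theorem \ref{th2}. A continuous map onto a second countable $M$ factors through $p_A$ for some countable $A$ (\cite{Tkachuk2}, Problem 299). By (2) of Theorem \ref{th11}, $p_A(B_1(X,[0,1]))=[0,1]^A$, which is Polish. $(2)\Rightarrow(3)$ is trivial. For $(4)\Rightarrow(1)$: by Theorem \ref{th11}, every function on a countable set extends, so $B_1(X,[0,1])$ is $\omega$-full in $[0,1]^X$. Then the $G_\delta$-dense subspace argument gives pseudocompleteness, exactly as in Theorem \ref{th1} (countable intersections of basic open sets are determined on countable sets). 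For $(1)\Rightarrow(4)$: pseudocomplete spaces are Choquet, so Theorem \ref{th11} (1)$\Rightarrow$(3) applies.

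The main step is $(3)\Rightarrow(4)$. Let $Z$ be a dense cofinally Polish subspace of $B_1(X,[0,1])$, hence dense in $[0,1]^X$. Fix a countable $A\subset X$. By Theorem 3.3 of \cite{Tkachuk}, $p_A(Z)$ contains a dense Polish, hence Čech-complete, subspace $P$ of $[0,1]^A$. We want to show $p_A(B_1(X,[0,1]))=[0,1]^A$.

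Suppose instead that $f\in[0,1]^A\setminus p_A(B_1(X,[0,1]))$. The coset trick of Theorem \ref{th2} is not available, because $[0,1]^A$ is not a group. We replace it by working inside the dense open subspace $(0,1)^A$, which is homeomorphic to $\mathbb{R}^A$ via a coordinatewise homeomorphism $h:(0,1)\to\mathbb{R}$. The set $S=p_A(B_1(X,[0,1]))$ is closed under post-composition of each coordinate with continuous maps, and under sums truncated to $[0,1]$. Since $f$ is not in $S$, we first reduce to the case where $f$ can be taken in $(0,1)^A$: replace $f$ by $\tfrac14+\tfrac12 f$, noting that the affine map $t\mapsto\tfrac14+\tfrac12 t$ is invertible and preserves $B^*_1$-extendability. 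Then $T=h(S\cap(0,1)^A)$ is an additive subgroup of $\mathbb{R}^A$, obtained as the projection of the group of real Baire-one functions that are bounded on $A$ in the appropriate sense. We then need to show that $T$ and $T+h(f)$ are disjoint, and that both contain dense Čech-complete subspaces coming from $P$. This contradicts Problem 264 of \cite{Tkachuk2}.

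The delicate part is this transfer: $h\circ$ and translation must be checked to keep $P\cap(0,1)^A$ dense and Čech-complete. Since $(0,1)^A$ is a dense $G_\delta$ subset of $[0,1]^A$, the set $P\cap(0,1)^A$ is still dense and Čech-complete, so the argument should go through. If this bookkeeping becomes awkward, the fallback is to deduce pseudocompleteness of $[0,1]^A$-projections directly from Theorem 3.4 of \cite{Tkachuk}, and then apply Theorem \ref{th11}.

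Finally, $(1)\Leftrightarrow(5)$ holds by Theorem 2.21 in \cite{BG} and Theorem 4.5 in \cite{Osip1}, since $B_1(X,[0,1])$ is dense in $[0,1]^X$. The same theorem of \cite{BG} gives $(5)\Leftrightarrow(6)$ for dense subspaces of products, as in the proof of Theorem \ref{th2}.
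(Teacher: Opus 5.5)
Your handling of (4)$\Rightarrow$(2)$\Rightarrow$(3), (1)$\Leftrightarrow$(4) and (1)$\Leftrightarrow$(5)$\Leftrightarrow$(6) is essentially in line with the paper, modulo which result of \cite{BG} is cited. You are also right that the coset trick cannot be used verbatim in $[0,1]^A$. The paper's own line ``$f+E$ is a dense Polish subspace of $[0,1]^A$'' is not literally correct, since $f+E\subseteq[0,2]^A$.

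The gap is in your replacement for (3)$\Rightarrow$(4). The assertion that $T=h(S\cap(0,1)^A)$ is an additive subgroup of $\mathbb{R}^A$ is false in general. The group $p_A(B_1(X))$ is contained in $T$. But a point of $S\cap(0,1)^A$ is the trace of some $g\in B_1(X,[0,1])$ that may take the values $0$ and $1$ off $A$, and then $h\circ g$ is not a real Baire-one function on $X$.

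Here is a counterexample. Take $X=\mathbb{R}$, $A=\mathbb{Q}$, and $h(s)=\tan(\pi(s-\tfrac12))$, so that $h(1-s)=-h(s)$. Put $g_1(p/q)=\tfrac1{2q}$ and $g_2(p/q)=1-\tfrac{c_q}{q}$, where $c_q=\tfrac12$ for odd $q$ and $c_q=\tfrac14$ for even $q$. At irrationals put $g_1=0$ and $g_2=1$. Both functions are semicontinuous, hence Baire-one, and both map $\mathbb{Q}$ into $(0,1)$. However, $h^{-1}(h\circ g_1+h\circ g_2)$ equals $\tfrac12$ at rationals with odd denominator and tends to $1$ along rationals with even denominator $q\to\infty$. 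A Baire-one extension of this function to $\mathbb{R}$ would have a point of continuity, yet its oscillation is at least $\tfrac12$ everywhere. So $T$ is not closed under addition.

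Consequently, from $t_1=t_2+h(f)$ with $t_1,t_2\in T$ you cannot conclude $h(f)\in T$, i.e.\ $f\in S$. The disjointness of $T$ and $T+h(f)$, which you list as still to be shown, is exactly the missing step. The density and \v{C}ech-completeness of $P\cap(0,1)^A$, which you flag as delicate, is actually fine.

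The fallback does not close the gap either, for three reasons. Theorem 3.4 of \cite{Tkachuk} concerns $C_p(X)$. Pseudocompleteness of $S$ follows automatically from $P$ but does not force $S=[0,1]^A$, since a proper dense $G_\delta$ subset of $[0,1]^A$ is pseudocomplete. And Theorem \ref{th11} needs information about $B_1(X,[0,1])$ itself, not about its countable projections.

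A coordinatewise homeomorphism does work if the ``difference'' it creates can be recovered by a map continuous on all of $[0,1]^2$. Note first that $P$ is a dense $G_\delta$ in $[0,1]^A$. Given a partition $A=Y\cup Z$, let $W=((0,1)\setminus\{\tfrac12\})^A$. Let $\Theta:W\to W$ be the identity on the $Y$-coordinates and $s\mapsto s\pm\tfrac12$ on the $Z$-coordinates; this swaps $(0,\tfrac12)$ and $(\tfrac12,1)$.

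Then $P\cap W$ and $\Theta(P\cap W)$ are dense $G_\delta$ subsets of $[0,1]^A$, so they meet. This yields $u,v\in S$ with $u=v$ on $Y$ and $|u-v|=\tfrac12$ on $Z$. For Baire-one extensions $\tilde u,\tilde v$, the function $\min(1,2|\tilde u-\tilde v|)\in B_1(X,[0,1])$ extends the characteristic function of $Z$.

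Hence all finite-valued functions on $A$ lie in $S$. Since uniform limits of Baire-one functions are Baire-one, a telescoping argument gives $S=[0,1]^A$. Theorem \ref{th11} then yields (4).
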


\begin{proof}  $(3)\Rightarrow (4)$.  Since $(2)\Rightarrow (3)$ is trivial, assume that $B_1(X,[0,1])$ has a dense
cofinally Polish subspace $Z$.  Fix any countable set $A\subset X$ and
observe that the set $Z$ is dense in the space $[0,1]^X$ so we can apply Theorem 3.3 in \cite{Tkachuk} to conclude that $p_A(Z)$ has a dense Polish subspace; here $p_A: [0,1]^X \rightarrow [0,1]^A$ is the projection. Since $p_A(Z)\subset B_1(A|X,[0,1])$, we can choose a dense Polish set $E$ in $B_1(A|X,[0,1])$.
 Let $f\in [0,1]^A\setminus B_1(A|X,[0,1])$. Then $f+E$ is a dense Polish subspace of $[0,1]^A$. Since
$[0,1]^A$ is Baire, $(f+E)\cap E\neq \emptyset$, it is a contradiction. Thus, $[0,1]^A=B_1(A|X,[0,1])$ and, hence, $A$ is $B^*_1$-embedded.

Since every countable subset of $X$ is $B^*_1$-embedded, $B_1(X,[0,1])$ is pseudocompact by Theorem \ref{th11}.

$(4)\Rightarrow(2)$. By Theorem 3.18   in \cite{NiTkYe}.

$(1)\Leftrightarrow(4)$. By Theorem 3.8 in \cite{Os2}.

By Proposition 2.19 in \cite{BG}, every $G_{\delta}$-dense subspace $X$ of a countably base-compact Moscow space $Y$ is countably base-compact.

\end{proof}

\begin{corollary}\label{cor10} {\it $B_1(X,[0,1])$ has a dense cofinally Polish subspace if and only if $X$ is a weak $\lambda$-space.}
\end{corollary}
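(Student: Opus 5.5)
The corollary follows by chaining Theorem \ref{th3} with Theorem \ref{th11}, and the plan is to verify the two directions separately through those equivalences.

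For the forward direction, suppose $B_1(X,[0,1])$ has a dense cofinally Polish subspace. This is condition (3) of Theorem \ref{th3}. The implication $(3)\Rightarrow(4)$ proved there then gives that $B_1(X,[0,1])$ is pseudocompact. Condition (3) of Theorem \ref{th11} is exactly this pseudocompactness, so by the equivalence $(3)\Leftrightarrow(4)$ in Theorem \ref{th11}, $X$ is a weak $\lambda$-space.

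For the converse, let $X$ be a weak $\lambda$-space. Theorem \ref{th11} gives that $B_1(X,[0,1])$ is pseudocompact, which is (4) of Theorem \ref{th3}. The implication $(4)\Rightarrow(2)$ there, which rests on the fact from \cite{NiTkYe} that pseudocompact spaces are cofinally Polish, shows that $B_1(X,[0,1])$ is itself cofinally Polish. It is then trivially a dense cofinally Polish subspace of itself.

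All the substantive work sits in the step $(3)\Rightarrow(4)$ of Theorem \ref{th3}, which is already established. That step projects the dense subspace onto $[0,1]^A$ for countable $A$, obtains a dense Polish subset, and uses a translate to reach a contradiction via the Baire property, so that every countable subset of $X$ is $B^*_1$-embedded. Given the two theorems as stated, no new obstacle remains; the only point to check is that the conditions are matched with the correct numbering in each theorem.
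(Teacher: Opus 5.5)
Your proposal is correct and follows the paper's own route: the corollary is read off from $(3)\Leftrightarrow(4)$ of Theorem~\ref{th3} together with $(3)\Leftrightarrow(4)$ of Theorem~\ref{th11}, with the converse going through the fact from \cite{NiTkYe} that pseudocompact spaces are cofinally Polish. One caution about your recap of the step $(3)\Rightarrow(4)$: for $f\in[0,1]^A\setminus B_1(A|X,[0,1])$ the translate $f+E$ is not a subspace of $[0,1]^A$, so that translation argument should be run in the group $(\mathbb{S}^1)^A$, as in the proof of Theorem~\ref{th6} (push $E\cap(0,1)^A$ forward to $(\mathbb{S}^1)^A$, use that $B_1(A|X,\mathbb{S}^1)$ is a subgroup, then retract the circle onto an arc to return to $[0,1]$-valued extensions); this does not affect your derivation of the corollary itself.
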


\section{Properties of spaces of continuous functions}

Recall that $S$ is {\it $C$-embedded} in $X$ if every function in $C(S)$ can be extended to a function in $C(X)$.

\begin{theorem}\label{th5} For any space $X$, the following statements are equivalent:

(1) $C_p(X)$ is pseudocomplete;

(2) $C_p(X)$ is a strong Choquet space;

(3) $C_p(X)$ is countably base-compact;

(4) $C_p(X)$ is cofinally Polish;

(5) $C_p(X)$ has a dense cofinally Polish subspace;

(6) Every countable subset of $X$ is $C$-embedded and closed;

(7) $C_p(X)$ is $G_{\delta}$-dense in $\mathbb{R}^X$;

(8) $C_p(X)$ is $\omega$-full in $\mathbb{R}^X$;

(9)  $C_p(X)$ is a Choquet space.

\end{theorem}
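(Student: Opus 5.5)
The plan is to organize the nine conditions around the combinatorial condition (6), arguing as in the proof of Theorem \ref{th2} with $B_1$-embedding replaced by $C$-embedding. Several equivalences are already known for $C_p$-spaces and will be quoted.

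\textbf{Known equivalences.} First, (1)$\Leftrightarrow$(6)$\Leftrightarrow$(7)$\Leftrightarrow$(9) is the classical characterization of pseudocomplete and Choquet $C_p(X)$. The relevant results are Lutzer--McCoy and the corresponding problems in \cite{Tkachuk2}: $C_p(X)$ is pseudocomplete iff it is Choquet iff every countable subset of $X$ is closed and $C$-embedded. The equivalence (6)$\Leftrightarrow$(8) is immediate from the definitions. A function on a countable closed $C$-embedded set $A$ is continuous because $A$ is discrete. Indeed, every countable subset of $A$ is closed, so every subset of $A$ is closed in $A$; hence each $f\in\mathbb{R}^A$ extends. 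Conversely, $\omega$-fullness forces every function on a countable set $A$ to extend continuously. Then $A$ is $C$-embedded and discrete, and it is closed: a point $x\in\overline{A}\setminus A$ would be killed by extending the characteristic function of a suitable countable set $A\cup\{x\}$. Next, (8)$\Leftrightarrow$(7) holds since a $G_\delta$ subset of $\mathbb{R}^X$ contains a set of the form $p_A^{-1}(p_A(f))$ with $A$ countable.

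\textbf{Strong Choquet and base-compactness.} For (1)$\Leftrightarrow$(2)$\Leftrightarrow$(3) I would repeat the argument of Theorem \ref{th2}. Under (7), $C_p(X)$ is $G_\delta$-dense in $\mathbb{R}^X$, and $\mathbb{R}^X$ is a countably base-compact Moscow space. So Proposition 2.19 of \cite{BG} gives (3), and (3)$\Rightarrow$(2)$\Rightarrow$(9) is standard. Alternatively, Theorem 2.21 of \cite{BG} together with \cite{Osip1} yields (1)$\Leftrightarrow$(2) directly.

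\textbf{Cofinal Polishness.} For (6)$\Rightarrow$(4), take a continuous onto map $\varphi:C_p(X)\to M$ with $M$ second countable. By Problem 299 of \cite{Tkachuk2} it factors as $\psi\circ p_A$ for some countable $A\subset X$. By (6), $p_A(C_p(X))=\mathbb{R}^A$ is Polish, so $g=p_A$ and $h=\psi$ witness cofinal Polishness. The step (4)$\Rightarrow$(5) is trivial. For (5)$\Rightarrow$(6), let $Z$ be a dense cofinally Polish subspace and $A\subset X$ countable. Theorem 3.3 of \cite{Tkachuk} gives a dense Polish subspace $P$ of $p_A(Z)\subseteq p_A(C_p(X))=C_p(A|X)$. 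If $C_p(A|X)\neq\mathbb{R}^A$, pick $f$ outside it. Then $P$ and $P+f$ are disjoint dense Polish subspaces of $\mathbb{R}^A$, contradicting Problem 264 of \cite{Tkachuk2}. Hence $C_p(A|X)=\mathbb{R}^A$ for every countable $A$, i.e.\ (8), and thus (6).

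\textbf{Main obstacle.} The delicate point is the deduction from $C_p(A|X)=\mathbb{R}^A$ for all countable $A$ to the closedness of countable sets. I would prove it by applying the hypothesis to $A\cup\{x\}$ for $x\in\overline{A}\setminus A$. The function equal to $0$ on $A$ and $1$ at $x$ has no continuous extension, so no such $x$ exists. The remaining implications are citations.
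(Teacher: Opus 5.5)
Your argument is correct, but it is organized differently from the paper's proof. The paper works almost entirely by citation. It quotes (1)$\Leftrightarrow$(6) from Pytkeev \cite{Pyt} and (1)$\Leftrightarrow$(4)$\Leftrightarrow$(5) from Theorem 3.4 of \cite{Tkachuk}. It takes the whole block (2),(3),(6)--(9) from Theorem 2.21 of \cite{BG}, adding only a one-line remark that closed countable sets are discrete.

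You instead do three things by hand or by a more local citation:
\begin{itemize}
\item You reprove the cofinal-Polishness equivalences by the projection and coset argument of Theorem~\ref{th2}, which is Tkachuk's proof transplanted.
\item You prove (6)$\Leftrightarrow$(8)$\Leftrightarrow$(7) directly.
\item You get (3) from $G_\delta$-density via Proposition 2.19 of \cite{BG} and the Moscow property of $\mathbb{R}^X$, as the paper itself does later in Theorems~\ref{th3} and~\ref{th6}.
\end{itemize}
This makes the logic more transparent. In particular, your $A\cup\{x\}$ argument shows explicitly that $\omega$-fullness forces countable sets to be closed, a direction the paper's remark leaves implicit.

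What your route does not remove is the reliance on the literature to close the loop. You still quote (1)$\Leftrightarrow$(6) and, crucially, (9)$\Rightarrow$(6); the paper's source for the latter is Theorem 2.21 of \cite{BG}.

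If you want that step internal too, argue as in the proof of Theorem~\ref{th6}. $C_p(X)$ is a dense subgroup of the complete group $\mathbb{R}^X$, so by Theorem 3 of \cite{BH} a Choquet $C_p(X)$ is $G_\delta$-dense in $\mathbb{R}^X$, which is (7). Add (3)$\Rightarrow$(1), since a countably base-compact regular space is pseudocomplete, and (1)$\Rightarrow$(9), since pseudocomplete spaces are Choquet. This closes every implication without Pytkeev or Lutzer--McCoy.

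Finally, drop the ``alternative'' via \cite{Osip1}. That reference concerns $B_1(X)$, not $C_p(X)$, though nothing in your chain depends on it.
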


\begin{proof}
$(1)\Leftrightarrow(6)$. By Theorem 2 in \cite{Pyt}.

$(1)\Leftrightarrow(4)\Leftrightarrow(5)$ By Theorem 3.4 in \cite{Tkachuk}.

Note that if every countable subset $S$ of $X$ is closed, $S$ is discrete subspace of $X$. Then every countable subset of $X$ is $C$-embedded and closed if and only if $C_p(X)$ is $\omega$-full in $R^X$.
By Theorem 2.21 in \cite{BG}, $(6)\Leftrightarrow(2)\Leftrightarrow(3)\Leftrightarrow(7)\Leftrightarrow(8)\Leftrightarrow(9)$.

\end{proof}

A subset $S$ of a space $X$ is said to be {\it $C^*$-embedded}  in $X$ \cite{GJ} if every continuous real-valued bounded
function on $S$ can be extended to a continuous function on $X$. Urysohn's extension theorem (\cite{GJ}, 1.17) says
that a subset $S$ of a space $X$ is $C^*$-embedded in $X$ if and only if for any disjoint zero-sets $A$ and $B$ in $S$,
there are disjoint zero-sets $Z_A$ and $Z_B$ in $X$ such that $A\subseteq Z_A$ and $B\subseteq Z_B$.

According to Tkachuk \cite{Tk86}, a space $X$ is said to be {\it $b$-discrete} if every countable subset of $X$ is closed
(equivalently, closed and discrete) and $C^*$-embedded in $X$.


Recall that a mapping $\varphi: K\rightarrow M$ is called {\it almost open} if for every nonempty open set $V\subseteq K$,
$Int(\varphi(V))=\emptyset$. The class of almost open mappings includes both open mappings and irreducible
mappings of compacta. Note that if $\varphi_{\alpha}: K_{\alpha}\rightarrow M_{\alpha}$ is an almost open surjective mapping for each
$\alpha\in A$, then the product of mappings $\prod_{\alpha\in A} \varphi_{\alpha}: \prod_{\alpha\in A} K_{\alpha}\rightarrow \prod_{\alpha\in A} M_{\alpha}$ is also an almost open
surjective mapping.

\begin{lemma}\label{lem1}(Lemma 3.1 in \cite{Os2}) Let $\psi: P\rightarrow L$ be a surjective continuous almost open mapping. If the set $E\subseteq P$
is dense in $P$ and is a Choquet space, then the set $\psi(E)$ is a Choquet space.
\end{lemma}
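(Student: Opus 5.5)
The plan is to transfer a winning strategy of player \textbf{II} in $G(E)$ to a winning strategy of player \textbf{II} in $G(\psi(E))$. Moves in $G(E)$ will be pulled back along $\psi$, and moves in $G(\psi(E))$ will be pushed forward along $\psi$.

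I read ``almost open'' in the intended sense: for every nonempty open $V\subseteq P$ one has $Int(\overline{\psi(V)})\neq\emptyset$. The displayed condition in the definition above appears to be a misprint for this. Two preliminary facts are needed:
\begin{itemize}
\item $\psi(E)$ is dense in $L$, because $E$ is dense in $P$ and $\psi$ is a continuous surjection.
\item Every open set of $\psi(E)$ has the form $W'\cap\psi(E)$ with $W'$ open in $L$, and every open set of $E$ has the form $V'\cap E$ with $V'$ open in $P$.
\end{itemize}
Fix a winning strategy $\sigma$ for \textbf{II} in $G(E)$.

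\textbf{Construction of the strategy.} Suppose \textbf{I} opens $G(\psi(E))$ with $W_0=W_0'\cap\psi(E)$.
\begin{itemize}
\item Pull back: put $U_0=\psi^{-1}(W_0')\cap E$. It is nonempty because $W_0'$ meets $\psi(E)$, and it is open in $E$. Let it be \textbf{I}'s move in the auxiliary game $G(E)$.
\item Apply $\sigma$ to get $V_1=\sigma(U_0)$. Write $V_1=V_1'\cap E$ with $V_1'$ open in $P$; replacing $V_1'$ by $V_1'\cap\psi^{-1}(W_0')$, we may assume $V_1'\subseteq\psi^{-1}(W_0')$.
\item Push forward: set $O_1=Int(\overline{\psi(V_1')})\cap W_0'$. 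The open set $Int(\overline{\psi(V_1')})$ is nonempty and lies in $\overline{W_0'}$, so it meets $W_0'$; hence $O_1$ is a nonempty open subset of $L$.
\item By density of $\psi(E)$, \textbf{II}'s answer $O_1\cap\psi(E)$ is nonempty and lies in $W_0$.
\end{itemize}

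\textbf{Inductive step.} Suppose \textbf{I} next plays $W_2=W_2'\cap\psi(E)\subseteq O_1\cap\psi(E)$, where we may take $W_2'\subseteq O_1$. Since $W_2'\subseteq\overline{\psi(V_1')}$ and $W_2'$ is open, $W_2'$ meets $\psi(V_1')$. Then $\psi^{-1}(W_2')\cap V_1'$ is a nonempty open subset of $P$, so by density of $E$ the set $U_2=\psi^{-1}(W_2')\cap V_1'\cap E\subseteq V_1$ is a legal nonempty move of \textbf{I} in $G(E)$. We continue exactly as before: $V_3=\sigma(U_0,U_2)$, shrink its open representative $V_3'$ into $\psi^{-1}(W_2')\cap V_1'$, and answer with $Int(\overline{\psi(V_3')})\cap W_2'$ intersected with $\psi(E)$.

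\textbf{Why the strategy wins.} The auxiliary play $U_0\supseteq V_1\supseteq U_2\supseteq\cdots$ in $G(E)$ follows $\sigma$, so there is a point $x\in\bigcap_n U_{2n}\subseteq E$. Since $U_{2n}\subseteq\psi^{-1}(W_{2n}')$, we get $\psi(x)\in W_{2n}'\cap\psi(E)=W_{2n}$ for all $n$. Hence $\bigcap_n W_{2n}\neq\emptyset$, and the defined strategy is winning for \textbf{II} in $G(\psi(E))$. Therefore $\psi(E)$ is a Choquet space.

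\textbf{Main obstacle.} The delicate point is keeping the pulled-back moves nonempty and nested. This is exactly where almost openness is used: it guarantees that \textbf{I}'s next move, which lies inside $Int(\overline{\psi(V_{2n+1}')})$, meets $\psi(V_{2n+1}')$. Density of $E$ and of $\psi(E)$ is used alongside it to pass between open sets of $P$, $L$ and their traces on $E$, $\psi(E)$.
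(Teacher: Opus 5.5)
Your proof is correct: pulling I's moves back along $\psi$ into $G(E)$, answering with II's winning strategy $\sigma$, and pushing each answer forward as $Int\,\overline{\psi(V')}$ intersected with I's last move is the natural strategy-transfer argument for this lemma. The paper itself gives no proof and only quotes Lemma 3.1 of \cite{Os2}; the intended definition of almost open there (the one satisfied by open maps, irreducible maps of compacta and the map $\varphi^X$ used later) is $Int\,\psi(V)\neq\emptyset$, which implies the weaker condition $Int\,\overline{\psi(V)}\neq\emptyset$ you assume, so your argument covers the lemma as it is applied.
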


The following theorem answers four open questions (Questions 4.1-4.4 in \cite{Tkachuk} or Problems 87-90 in \cite{Tk22}).

\begin{theorem}\label{th6} For any space $X$, the following statements are equivalent:

(1) $C_p(X,[0,1])$ is pseudocomplete;

(2) $C_p(X,[0,1])$ is pseudocompact;

(3) $C_p(X,[0,1])$ is a strong Choquet space;

(4) $C_p(X,[0,1])$ is countably base-compact;

(5) $C_p(X,[0,1])$ is cofinally Polish;

(6) $C_p(X,[0,1])$ has a dense cofinally Polish subspace;

(7) $X$ is $b$-discrete.
\end{theorem}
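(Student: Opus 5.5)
The plan is to run the same scheme as in Theorem \ref{th3}, with $C_p$ in place of $B_1$ and with $b$-discreteness playing the role of the weak $\lambda$-property. The central statement to prove is that (6) implies (7), and that (7) implies (2). The remaining implications are either trivial or follow from known results:
\begin{itemize}
\item $(5)\Rightarrow(6)$ is trivial.
\item $(2)\Rightarrow(5)$ holds because every pseudocompact space is cofinally Polish (Proposition 3.18 of \cite{NiTkYe}).
\item $(3)\Leftrightarrow(4)$ follows from Theorem 2.21 and Proposition 2.19 of \cite{BG}, since $C_p(X,[0,1])$ is dense in the Moscow space $[0,1]^X$.
\item Pseudocompactness together with $G_\delta$-density in $[0,1]^X$ gives (1) and (3), because every dense subspace containing a $G_\delta$-dense set behaves like $[0,1]^X$.
\item Conversely, (1) or (3) implies a Choquet-type property, and this will be fed into the same argument as (6).
\end{itemize}

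\textbf{$(6)\Rightarrow(7)$.} Let $Z$ be a dense cofinally Polish subspace of $C_p(X,[0,1])$, and fix a countable set $A\subset X$. Theorem 3.3 of \cite{Tkachuk} gives a dense Polish subspace $E$ of $p_A(Z)\subseteq C_p(A|X,[0,1])$. Suppose $C_p(A|X,[0,1])\neq[0,1]^A$ and pick $f$ outside it. We would like to translate as in Theorem \ref{th3}, but $[0,1]^A$ is not a group, so $f+E$ need not lie in $[0,1]^A$.

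To fix this, use $\mathbb{T}^A$, identifying $[0,1]$ with the circle by an almost open quotient map, or more simply use the bijection $[0,1]\to[0,1]$, $t\mapsto |t-f(a)|$ coordinatewise. I would first try the following. Consider the map $\psi:[0,1]^A\to[0,1]^A$ given by $\psi(g)(a)=|g(a)-f(a)|$. It is continuous, open on each coordinate factor onto its image $[0,\max(f(a),1-f(a))]$, hence almost open, and it preserves continuity restrictions: $g\in C_p(A|X,[0,1])$ if and only if a suitable lift is.

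A cleaner route is to restrict to the open dense subcube $(0,1)^A$, which is homeomorphic to $\mathbb{R}^A$ by a coordinatewise homeomorphism $h$. The image $h(E\cap(0,1)^A)$ is a dense $G_\delta$ (Polish) set in $\mathbb{R}^A$ contained in $h(C_p(A|X,(0,1)))$, and the latter is a subgroup-invariant set: it is closed under adding restrictions of continuous real functions. Translating by a suitable $\tilde f\in\mathbb{R}^A$ outside $h(C_p(A|X,(0,1)))$, two disjoint dense Polish subsets of $\mathbb{R}^A$ appear, contradicting the Baire property. Hence every $[0,1]$-valued function on $A$ extends continuously to $X$.

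Taking functions that separate points shows that $A$ is discrete and every bounded function on it extends. To see that $A$ is closed, suppose $x\in\overline A\setminus A$. Split $A$ into two pieces, both accumulating at $x$ when possible, and take the function that is $0$ on one and $1$ on the other; it has no continuous extension. If all of $A$ near $x$ lies in a single piece, refine to a sequence and split it. Thus $X$ is $b$-discrete.

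\textbf{$(7)\Rightarrow(2)$.} If $X$ is $b$-discrete, then $p_A(C_p(X,[0,1]))=[0,1]^A$ for every countable $A$. So $C_p(X,[0,1])$ is $G_\delta$-dense in $[0,1]^X$, and a $G_\delta$-dense subspace of a compact product is pseudocompact. For $(1)\Rightarrow(7)$ and $(3)\Rightarrow(7)$, both give the Choquet property of $C_p(X,[0,1])$. Projecting by the almost open map $p_A$ and applying Lemma \ref{lem1}, $p_A(C_p(X,[0,1]))$ is a dense Choquet subspace of $[0,1]^A$. A Choquet subset of a Polish space that is dense contains a dense $G_\delta$, which puts us back in the translation argument above.

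The main obstacle is the translation step, since $[0,1]^A$ lacks a group structure. I expect the passage to $(0,1)^A\cong\mathbb{R}^A$ to need care: one must check that density and Polishness survive intersecting with $(0,1)^A$ (they do, since it is an open dense $G_\delta$), and that the set of continuous traces is invariant under the chosen translations.
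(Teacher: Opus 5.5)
\textbf{The gap is in the translation step in $\mathbb{R}^A$.} Your $(6)\Rightarrow(7)$ rests on it, and so does your reduction of (1) and (3). Put $T=h\bigl(C_p(A|X,[0,1])\cap(0,1)^A\bigr)$ and $D=h\bigl(E\cap(0,1)^A\bigr)\subseteq T$.

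Both $D$ and $D+\tilde f$ are dense $G_\delta$ sets in $\mathbb{R}^A$. So the Baire property gives $D\cap(D+\tilde f)\neq\emptyset$ for \emph{every} $\tilde f\in\mathbb{R}^A$, and no choice of $\tilde f$ makes them disjoint. What you actually obtain is $\tilde f\in D-D\subseteq T-T$. To conclude $\tilde f\in T$ you would need $T-T\subseteq T$.

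Invariance of $T$ under the group $G$ of traces on $A$ of continuous real-valued functions does not give this. (That invariance does hold, though you did not verify it.) For $X=\omega+1$ and $A=\omega$, $T$ is the set of real sequences with a limit in $[-\infty,+\infty]$. It is $G$-invariant, but $T-T=\mathbb{R}^\omega\neq T$.

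If instead $C_p(A|X,(0,1))$ means traces of continuous maps $X\to(0,1)$, then its $h$-image is exactly the group $G$. But then the inclusion $D\subseteq G$ is false, because an element of $E$ with values in $(0,1)$ on $A$ extends only to a $[0,1]$-valued map. It must be false: in Shakhmatov's space of Example~\ref{ex1}, where (6) holds, every continuous real function is bounded. So for infinite $A$, $G$ is meager in $\mathbb{R}^A$, while $D$ is comeager. The additive group of $\mathbb{R}$ is simply the wrong structure here.

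\textbf{How to repair it.} You are right that $[0,1]^A$ is not a group; the paper's own $(6)\Rightarrow(7)$ writes $f+E$ there without comment. The fix is the device you mention and then drop, which the paper uses in $(1)\Rightarrow(7)$: the quotient $q\colon[0,1]\to\mathbb{S}^1=\mathbb{R}/\mathbb{Z}$.

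The map $q^A$ sends $(0,1)^A$ homeomorphically onto the dense $G_\delta$ set $(\mathbb{S}^1\setminus\{q(0)\})^A$. Hence $q^A(E\cap(0,1)^A)$ is a dense $G_\delta$ in $(\mathbb{S}^1)^A$. It lies inside the \emph{subgroup} of traces on $A$ of continuous maps $X\to\mathbb{S}^1$, so the coset argument is now legitimate and shows this subgroup is all of $(\mathbb{S}^1)^A$.

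Then proceed as in the paper. For disjoint countable $B,C$, extend the map that is $a$ on $B$ and $b\neq a$ on $C$ to some $\tilde\varphi$. The disjoint zero-sets $\tilde\varphi^{-1}(a)\supseteq B$ and $\tilde\varphi^{-1}(b)\supseteq C$ give $b$-discreteness.

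\textbf{Smaller points.}
\begin{itemize}
\item $(0,1)^A$ is a dense $G_\delta$ with empty interior, not open. This is harmless for your argument.
\item Your splitting argument for closedness cannot work. In $\omega\cup\{p\}\subseteq\beta\omega$ with $p\in\omega^*$, every $[0,1]$-valued function on $A=\omega$ extends continuously, yet $A$ is not closed. Apply the extension property to the countable set $A\cup\{x\}$ instead, with the function equal to $0$ on $A$ and $1$ at $x$.
\item Proposition 2.19 of \cite{BG} requires $G_\delta$-density, not mere density. So $(3)\Rightarrow(4)$ must pass through (7).
\end{itemize}
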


\begin{proof} $(1)\Rightarrow(7)$. Let $C_p(X,[0,1])$ be a pseudocomplete space. Then it is Choquet.  Consider a continuous
almost open mapping $\varphi: [0,1]\rightarrow \mathbb{S}^1$ of the interval $[0,1]$ onto the circle $\mathbb{S}^1$, identifying the endpoints
$0$ and $1$ ($\mathbb{S}^1=\mathbb{R}/\mathbb{Z}$ is considered as the quotient group of the group $\mathbb{R}$ by the subgroup of integers $\mathbb{Z}$).
The mapping $\varphi^X: [0,1]^X \rightarrow (\mathbb{S}^1)^X$ is a surjective continuous almost open mapping. Since
$C_p(X,[0,1])$ is dense in $[0,1]^X$ and is a Choquet space, by Lemma \ref{lem1} $\varphi^X(C_p(X,[0,1]))$ is a Choquet
space. But $\varphi^X(C_p(X,[0,1])=\{\varphi\circ f: f\in C_p(X,[0,1])\}\subseteq C_p(X,\mathbb{S}^1)$, i.e., $C_p(X,\mathbb{S}^1)$  contains
a dense Choquet space $\varphi^X(C_p(X,[0,1]))$ and, consequently, is itself a Choquet space.
Since $\mathbb{S}^1$ is a topological group, so is $C_p(X, \mathbb{S}^1)$ (under pointwise operations).
In (\cite{BH},Theorem 3) it is proved that a topological group $G$ is a Choquet space if and only if the
Raikov completion $\widetilde{G}$ is a Choquet space and $G$ is a $G_{\delta}$-dense subset of the space $\widetilde{G}$. It follows that
the space $C_p(X, \mathbb{S}^1)$ is $G_{\delta}$-dense in the space $(\mathbb{S}^1)^X$.
In  (\cite{BG}, Proposition 2.16) it is proved that for arbitrary topological spaces $X$, $Y$ and $\mathcal{F}\subseteq Y^X$,
where $Y$ is of countable pseudocharacter, $G_{\delta}$-density of $\mathcal{F}$ in the space $Y^X$ is equivalent to $\omega$-fullness
of $\mathcal{F}$ in $Y^X$. It follows that the space $C_p(X, \mathbb{S}^1)$ is $\omega$-full in the space $(\mathbb{S}^1)^X$.
Let $A$ and $B$ be two disjoint countable subsets of the space $X$ and $a$ and $b$ be two distinct points
in the space $\mathbb{S}^1$. By virtue of the $\omega$-fullness of the space $C_p(X, \mathbb{S}^1)$, a mapping $f : A\cup B \rightarrow \mathbb{S}^1$ such
that $f(A)\subseteq \{a\}$ and $f(B)\subseteq \{b\}$ extends to a mapping 	$\widetilde{f}\in C_p(X, \mathbb{S}^1)$, i.e., 	 $\widetilde{f}|(A\cup B) = f$.
It remains to note that $\widetilde{f}^{-1}(a)$ and $\widetilde{f}^{-1}(b)$	are two disjoint zero-sets such that $A\subseteq \widetilde{f}^{-1}(a)$ and
$B\subseteq \widetilde{f}^{-1}(b)$. Thus, every countable subset of $X$ is $C^*$-embedded and closed.

$(7)\Rightarrow(2)$. By Lemma 3.4 in \cite{Tk86}, $C_p(X,[0,1])$ is pseudocompact.

 Since any Tychonoff power $[0,1]^X$ of the segment $[0,1]$ is a Moscow space, Proposition 2.19 in \cite{BG} implies that each $G_{\delta}$-dense subspace $C_p(X,[0,1]$ of $[0,1]^X$ is countably base-compact. Thus, $(7)\Rightarrow(4)$ holds.

$(4)\Rightarrow(3)$. It is trivial.

Since any strong Choquet space is Choquet, $(3)\Rightarrow(1)$ by Theorem 4.1 in \cite{Os2}.

$(2)\Rightarrow(5)$. Every pseudocompact space is cofinally Polish (Proposition 3.18 in \cite{NiTkYe}).

$(5)\Rightarrow(6)$. It is trivial.

$(6)\Rightarrow(7)$. Assume that $C_p(X,[0,1])$ has a dense
cofinally Polish subspace $Z$.  Fix any countable set $A\subset X$ and
observe that the set $Z$ is dense in the space $[0,1]^X$ so we can apply Theorem 3.3 in \cite{Tkachuk} to conclude that $p_A(Z)$ has a dense Polish subspace; here $p_A: [0,1]^X \rightarrow [0,1]^A$ is the projection. Since $p_A(Z)\subset C_p(A|X,[0,1])$, we can choose a dense Polish set $E$ in $C_p(A|X,[0,1])$.
 Let $f\in [0,1]^A\setminus C_p(A|X,[0,1])$. Then $f+E$ is a dense Polish subspace of $[0,1]^A$. Since
$[0,1]^A$ is Baire, $(f+E)\cap E\neq \emptyset$, it is a contradiction. Thus, $[0,1]^A=C_p(A|X,[0,1])$ and hence $A$ is $C^*$-embedded.

\end{proof}

\section{Projectively Menger property}

A space $X$ is said to be {\it Menger} \cite{Hur}  if for every sequence $\{\mathcal{U}_n: n\in \mathbb{N}\}$ of open
covers of $X$, there are finite subfamilies $\mathcal{V}_n\subset \mathcal{U}_n$ such that $\bigcup \{\mathcal{V}_n: n\in \mathbb{N}\}$ is a cover of $X$.

A space $X$ is {\it projectively Menger}  if every second
countable continuous image of $X$ is Menger.

Every Menger space is projectively Menger. It is known (\cite{BCM},  Proposition 8 (1)) that a space is Menger if
and only if it is Lindel\"{o}f and projectively Menger.


Among several characterizations of a projectively Menger space, the following one is useful.

\begin{theorem} (\cite{BCM}, Theorem 6). A space $X$ is projectively Menger if and only if for every sequence $\{\mathcal{U}_n :
n\in \mathbb{N}\}$ of countable covers of $X$ consisting of cozero-sets in $X$, there are finite subfamilies $\mathcal{V}_n\subseteq \mathcal{U}_n$ such
that $\bigcup\{\mathcal{V}_n: n\in\mathbb{N}\}$ is a cover of $X$.
\end{theorem}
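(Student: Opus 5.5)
The plan is to prove both directions directly, by moving between cozero covers of $X$ and open covers of a second countable image of $X$. Countably many cozero sets can be encoded by a single continuous map into $\mathbb{R}^{\omega}$, and conversely, open sets of a metrizable image pull back to cozero sets of $X$. As elsewhere in the paper, I take all spaces, including the continuous images in the definition of projectively Menger, to be Tychonoff.

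($\Rightarrow$) Assume $X$ is projectively Menger, and let $\{\mathcal{U}_n : n\in\mathbb{N}\}$ be a sequence of countable covers of $X$ by cozero sets. The family $\mathcal{U}=\bigcup_n\mathcal{U}_n$ is countable. For each $U\in\mathcal{U}$ fix a continuous $f_U:X\to[0,1]$ with $U=\{x: f_U(x)>0\}$. Let $F=\Delta_{U\in\mathcal{U}}f_U : X\to[0,1]^{\mathcal{U}}$ be the diagonal map and put $M=F(X)$; this is second countable, being a subspace of a countable product. For each $U$ let
$$W_U=\{y\in M: y(U)>0\},$$
an open subset of $M$ with $F^{-1}(W_U)=U$. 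Since $F$ is onto $M$, each family $\mathcal{W}_n=\{W_U: U\in\mathcal{U}_n\}$ is an open cover of $M$. $M$ is Menger by hypothesis, so there are finite $\mathcal{W}'_n\subseteq\mathcal{W}_n$ whose union covers $M$. Let $\mathcal{V}_n$ be the corresponding finite subfamilies of $\mathcal{U}_n$. Pulling back by $F$ shows that $\bigcup_n\mathcal{V}_n$ covers $X$.

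($\Leftarrow$) Assume the selection property for countable cozero covers. Let $f:X\to M$ be a continuous surjection onto a second countable space $M$, and let $\{\mathcal{O}_n\}$ be a sequence of open covers of $M$. Fix a countable base $\mathcal{B}$ of $M$. Replace each $\mathcal{O}_n$ by the countable cover $\mathcal{B}_n=\{B\in\mathcal{B}: B\subseteq O \text{ for some } O\in\mathcal{O}_n\}$; it suffices to find finite subfamilies of the $\mathcal{B}_n$ covering $M$. Since $M$ is regular and second countable, it is metrizable, so every open set of $M$ is a cozero set. Hence every $f^{-1}(B)$ with $B\in\mathcal{B}_n$ is a cozero set in $X$, and $\{f^{-1}(B): B\in\mathcal{B}_n\}$ is a countable cozero cover of $X$. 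Applying the hypothesis gives finite subfamilies covering $X$. By surjectivity of $f$, their images, which are finite subfamilies of the $\mathcal{B}_n$, cover $M$. Choosing one $O\in\mathcal{O}_n$ containing each chosen $B$ yields finite $\mathcal{V}_n\subseteq\mathcal{O}_n$ covering $M$, so $M$ is Menger.

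The main obstacle is a convention issue in the second direction. The argument needs open sets of $M$ to be cozero, so that their preimages are cozero in $X$, and this relies on $M$ being regular, hence metrizable by Urysohn's metrization theorem. Without that convention, the second direction would have to refine each open cover of $M$ through the induced Tychonoff structure, for instance by factoring $f$ through a countable family of continuous real-valued functions. The first direction is routine.
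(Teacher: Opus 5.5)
Your proof is correct under the convention you adopt. The paper gives no argument of its own and only cites \cite{BCM}, so the only comparison to make is that yours is the standard proof: realize the countably many cozero sets as preimages of open sets under a diagonal map into $[0,1]^{\omega}$, and conversely pull back a countable base of a separable metrizable image.

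One correction to your closing remark. The Tychonoff (or at least regular) convention on the images is not merely convenient but necessary, so no workaround for non-regular images can exist. Take a maximal almost disjoint family $\mathcal{A}$ of size $\mathfrak{c}$. The space $\Psi(\mathcal{A})$ is pseudocompact, so its second countable Tychonoff images are compact, and your first direction shows it satisfies the cozero-cover condition. Now let $M=\omega^{\omega}\cup\{p\}$, whose nonempty open sets are the sets $U\cup\{p\}$ with $U$ open in $\omega^{\omega}$. This space is second countable and not Menger. The map sending $\omega$ to $p$ and $\mathcal{A}$ bijectively onto $\omega^{\omega}$ is a continuous surjection of $\Psi(\mathcal{A})$ onto $M$.
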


In (\cite{Sakai}, Theorem 2.4) it is proved that a space $X$ is $b$-discrete if and only if $C_p(X,[0,1])$ is projectively Menger.

Similarly, we prove the criterion for projective Menger property for $B_1(X,[0,1])$.

\begin{theorem} A space $X$ is a weak $\lambda$-space if and only if $B_1(X,[0,1])$ is projectively Menger.
\end{theorem}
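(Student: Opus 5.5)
For the forward direction I would argue as follows. If $X$ is a weak $\lambda$-space, then by Theorem \ref{th11} the space $B_1(X,[0,1])$ is pseudocompact. I would then use a standard fact: every continuous image of a pseudocompact space is pseudocompact, and a pseudocompact second countable (hence metrizable) space is compact. So every second countable continuous image of $B_1(X,[0,1])$ is compact, hence Menger. Therefore $B_1(X,[0,1])$ is projectively Menger.

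For the converse, assume $B_1(X,[0,1])$ is projectively Menger, and fix disjoint countable sets $Y,Z\subseteq X$. By Theorem \ref{th11} it suffices to show that every countable $A\subseteq X$ is $B^*_1$-embedded, i.e.\ that $p_A(B_1(X,[0,1]))=[0,1]^A$, where $p_A:[0,1]^X\rightarrow [0,1]^A$ is the projection. Put $M=p_A(B_1(X,[0,1]))$. Then $M$ is a second countable continuous image, so it is Menger. It is also a dense subset of $[0,1]^A$ and convex, since $B_1(X,[0,1])$ is closed under convex combinations. Suppose some $g\in[0,1]^A\setminus M$ exists; the aim is a contradiction. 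Mimicking Sakai's argument for $C_p(X,[0,1])$ (Theorem 2.4 in \cite{Sakai}), I would use the lattice and algebraic structure of Baire-one functions. If $h\in B_1(X,[0,1])$ satisfies $p_A(h)=g$ on a finite set, and we patch along a sequence of points of $A$, the Menger property of $M$ should force a full extension. Concretely, enumerate $A=\{a_n\}$ and, for each $n$, define the open cover $\mathcal{U}_n$ of $M$ by the sets
\[
U_{n,k}=\{m\in M: |m(a_j)-g(a_j)|>1/k \text{ for some } j\le n\}\cup\ldots,
\]
arranged so that a finite selection covering $M$ yields a function in $M$ agreeing with $g$, which is a contradiction.

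A cleaner route, which I would try first, is to reduce to the fact that a Menger space is Hurewicz-free of dense $G_\delta$ obstructions. A Menger subset of a Polish space is not necessarily $\sigma$-compact, but a Menger dense subset $M\subseteq[0,1]^A$ that is invariant under the translations and reflections preserving $B^*_1$-extendability should be comeager or meager. I would exploit that $M$ is Menger, hence $M$ is contained in no "$\omega$-fan" obstruction. Explicitly, for $g\notin M$, the map $m\mapsto (1/|m(a_n)-g(a_n)|)_n$ is continuous from $M$ into $(0,\infty]^\omega$. Its image is Menger, hence non-dominating. I would then produce a function in $B_1(X,[0,1])$ whose trace on $A$ approaches $g$ faster than any given bound. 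This uses the weak-$\lambda$ style separation available for single points together with pasting of Baire-one functions along a discrete-in-$G_\delta$-$F_\sigma$ partition. That yields a dominating family in the image and hence a contradiction.

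The main obstacle is this last step: constructing, from $g\notin M$, Baire-one functions whose restrictions to $A$ have prescribed rates of approach to $g$, so that the image is dominating. I expect that it requires carefully combining the structure of $B_1$ with Sakai's technique, and possibly the characterization via countable cozero covers in \cite{BCM}, Theorem 6, applied with covers of the form $\{f: f(a_n)<g(a_n)-1/k \text{ or } f(a_n)>g(a_n)+1/k\}$.
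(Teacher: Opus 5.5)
Your forward direction is correct and is the paper's argument: Theorem \ref{th11} gives pseudocompactness, so every second countable image is compact.

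\textbf{The converse has a genuine gap.} The covers $U_{n,k}$ are never defined, and the ``comeager or meager'' dichotomy is unsupported. The step you flag as the main obstacle is in fact the whole content of the theorem. Two concrete problems:

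\emph{The properties of $M$ you use are insufficient.} $M=p_A(B_1(X,[0,1]))$ is dense, convex and Menger, but these cannot force $M=[0,1]^A$. The set of eventually-zero sequences in $[0,1]^\omega$ is dense, convex and $\sigma$-compact (hence Menger), yet it is not all of $[0,1]^\omega$.

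\emph{The domination plan cannot work as described.} Take $A=Y\cup Z$ and $g=\chi_Z|A$. Suppose that, for $\psi\equiv 4$, you had $m\in M$ with $|m(a_n)-g(a_n)|\le 1/4$ for almost all $n$. Then an extension $\tilde m\in B_1(X,[0,1])$ of $m$ puts $Y$ and $Z$, up to a finite set (which can always be handled separately), into the disjoint $G_\delta$ sets $\tilde m^{-1}([0,1/4])$ and $\tilde m^{-1}([3/4,1])$. That already yields the weak $\lambda$ separation, by the same final step as in the paper. For $X=\mathbb{R}$, which is not a weak $\lambda$-space, such an $m$ does not exist. So no construction of ``functions approaching $g$ at prescribed rates'' from $g\notin M$ and the structure of $B_1$ alone can succeed. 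The Menger hypothesis would have to enter exactly there, and your proposal does not say how.

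\textbf{What is missing.} The idea is to aim directly at the two-valued separation, not at arbitrary $g\in[0,1]^A$. Projective Mengerness is then used only to pass from finite to countable.

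For disjoint countable $A$ and $B=\{b_n\}$, with $B_m=\{b_1,\dots,b_m\}$, the paper takes the zero-sets $Z_{n,m}=\{f\in B_1(X,[0,1]): f(A)=\{0\},\ f(B_m)\subseteq[2^{-n},1]\}$.

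If $\bigcap_m Z_{n,m}=\emptyset$ for every $n$, the complements form countable cozero covers. The cozero-cover characterization of projective Mengerness (Theorem 6 of \cite{BCM}) then gives $\varphi\in\mathbb{N}^{\mathbb{N}}$ with $\bigcap_n Z_{n,\varphi(n)}=\emptyset$.

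However, a countable set can always be separated from a finite set by a Baire-one function. So there are $g_n\in B_1(X,[0,1])$ with $g_n(A)=\{0\}$ and $g_n(B_{\varphi(n)})=\{1\}$. The function $g=\sum_j 2^{-j}g_j$ lies in $B_1(X,[0,1])$ and in every $Z_{n,\varphi(n)}$, a contradiction.

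Hence some $h\in\bigcap_m Z_{n,m}$ exists, i.e.\ $h(A)=\{0\}$ and $h(B)\subseteq[2^{-n},1]$, which gives the $G_\delta$-$F_\sigma$ separation.

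The two structural facts doing the work are absent from your list of properties of $M$:
\begin{itemize}
\item every finite pattern is realizable;
\item $B_1(X,[0,1])$ is closed under uniformly convergent series $\sum_j 2^{-j}g_j$, which is exactly what the eventually-zero example lacks.
\end{itemize}
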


\begin{proof} If $X$ is a weak $\lambda$-space, $B_1(X,[0,1])$ is pseudocompact (Theorem \ref{th3}) and hence it is projectively Menger.

Conversely, assume that $B_1(X,[0,1])$ is projectively Menger.  Let $A$ and $B$ be disjoint countable subsets in $X$. Let $B=\{b_n: n\in \mathbb{N}\}$, and let
$B_n =\{b_1,..., b_n\}$. For each $n$, $m\in \mathbb{N}$, we put

$Z_{n,m}=\{f\in B_1(X,[0,1]): f (A) =\{0\}$ and $f(B_m)\subset[\frac{1}{2^n}, 1]\}$.

Since $A$ and $B_m$ are countable, each $Z_{n,m}$ is a zero-set in $B_1(X,[0,1])$.

Assume that $\bigcap \{Z_{n,m} : m\in \mathbb{N}\}=\emptyset$ for
all $n\in \mathbb{N}$. Using the projective Menger property of $B_1(X,[0,1])$, by Theorem 6 in \cite{BCM}, we can take some $\varphi\in \mathbb{N}^\mathbb{N}$
such that $\bigcap \{Z_{n,\varphi(n)}: n\in \mathbb{N}\}=\emptyset.$ For each $n\in \mathbb{N}$, take any $g_n\in B_1(X,[0,1])$ satisfying $g_n(A) =\{0\}$ and
$g_n(B_{\varphi(n)}) =\{1\}$.

Let $g =\sum\limits_{j=1}^{\infty} 2^{-j} g_j$. Then, $g\in B_1(X,[0,1])$ and $g(A)=\{0\}$. Fix any $n\in \mathbb{N}$ and $1\leq k\leq \varphi(n)$.
Then we have

$$g(b_k)=\sum\limits_{j=1}^{\infty} 2^{-j} g_j(b_k)\geq 2^{-n}g_n(b_k)=2^{-n}.$$
Hence, $g\in \bigcap \{Z_{n,\varphi(n)}: n\in \mathbb{N}\}$. This is a contradiction. Thus, there is some $n\in \mathbb{N}$ such that $\bigcap \{Z_{n,m} : m\in \mathbb{N}\}\neq\emptyset$.
Let $h\in \bigcap \{Z_{n,m} : m\in \mathbb{N}\}$, then $A\subseteq h^{-1}(0)$ and $B\subseteq h^{-1}([\frac{1}{2^n}, 1])$. Thus, $A$ and $B$ are $G_{\delta}$ separated.

\end{proof}

\section{Exampes}

\begin{example}\label{ex1} If $X$ is the Shakhmatov's example of an infinite pseudocompact $b$-discrete space \cite{Sh}. Then $C_p(X,[0,1])$ is pseudocompact and hence cofinally Polish by Theorem \ref{th6}. Since any countable subset $S$ of $X$ is closed, $S$ is a discrete subspace. Let $S=\{s_i: i\in \mathbb{N}\}$. Then $f:S\rightarrow \mathbb{R}$ such that $f(s_i)=i$ is not extended on $X$ because $X$ is pseudocompact. Thus $S$ is not $C$-embedded  and therefore $C_p(X)$ is not cofinally Polish by Theorem \ref{th5}.
\end{example}

Examples of spaces with similar properties can be found in \cite{Is}, \cite{Arh2}.

\begin{example} Let $X$ be a $\lambda$-set. Since any $\lambda$-set is a $\Delta_1$-space, $B_1(X)$ is cofinally Polish by Theorem \ref{th2}.
Since $X$ is a separable metrizable (non-discrete) space, $C_p(X)$ is not Baire and therefore is not cofinally Polish by Theorem \ref{th5}.
\end{example}

In (\cite{JSS}, Theorem 4.1) it is proved under $ZFC$ that for each infinite car-
dinal $\kappa$ the Cantor cube $D^{2^{\kappa}}$
contains a dense submaximal subspace $X$ with $|X| =\Delta(X) =\kappa$. Then in the case $\kappa=\omega$ there is a countable dense submaximal subspace $X$ in the Cantor cube $D^{\mathfrak{c}}$.
Note that any countable dense subspace of $D^{\mathfrak{c}}$ is homeomorphic to
a dense subspace of $[0,1]^{\mathfrak{c}}$ (Fact 4.2 in \cite{JSS}).

\begin{example} Consider a space $X\subseteq [0,1]^{\mathfrak{c}}$ (Theorem 4.5 in \cite{OS1}) such that:

(a)  $X=S\cup P$ where $S$ and $P$  are disjoint dense subsets of $[0,1]^{\mathfrak{c}}$;

(b) $S$ is a countable submaximal space;

(c) $P$ is the Shakhmatov's space, i.e. $P$ is a pseudocompact $b$-discrete space (see Example \ref{ex1}).

Note that $X$ is a pseudocompact dense subset of $[0,1]^{\mathfrak{c}}$. It follows that $\beta X=[0,1]^{\mathfrak{c}}$ (see Theorem 1.7.7 in \cite{HTMT}).

By Theorem 2.2 \cite{Os26}, $X$ is a weak $\lambda$-space that is not $\Delta_1$-space. Then $B_1(X,[0,1])$ is cofinally Polish by Theorem \ref{th3} and Corollary \ref{cor10}, and $B_1(X)$ is not cofinally Polish by Theorems \ref{th1} and \ref{th2}.

Note that the countable set $S$ is not $C^*$-embedded in $X$ because if $S$ is $C^*$-embedded then $|\beta S|>|\beta X|=|[0,1]^{\mathfrak{c}}|$. Thus $C_p(X,[0,1])$ is not cofinally Polish by Theorem \ref{th6}.

\end{example}

\begin{example}\label{Ex4} (Example 2 in \cite{Is}) There exists a space $X$ that:

(1) any countable subset of $X$ is closed;

(2) there is a countable subset of $X$ which is not $C^*$-embedded.

Let $A$ be the set of point of $\omega_2+1$ having an uncountable base of neighborhoods, and
$B$ the set of isolated points of $\omega_2+1$. Let us put
$Z=(\omega_2+1)\times (\omega+1)$, $B_n = B\times \{n\}$, $A^*= A\times \{\omega\}$,
$D=\{\omega_2\}\times \mathbb{N}$ and $X=(\bigcup \{B_n: n\in \mathbb{N}\})\bigcup A^*\bigcup D$. Then the
subspace $X$ of $Z$ is a desired space. Indeed, it is routinely
proved that any countable subset is closed, and that $D$ is
not $C^*$-embedded.
\end{example}

Thus Example \ref{Ex4} provides a constructive answer to open questions (Questions 4.3 and 4.4 in \cite{Tkachuk} or Problems 89-90 in \cite{Tk22}). Indeed the space $X$  such that any countable subset of $X$ is closed, but $C_p(X,[0,1])$  does not have a dense cofinally Polish subspace because $D$ is
not $C^*$-embedded (Theorem \ref{th6}).

The following Figure \ref{fig:diagram2} shows the relationship between the study function spaces and examples.

\begin{figure}[H]
\begin{center}
\begin{tikzpicture}[xscale=7.7, yscale=2, >=Stealth]

    \node (TL) at (0,1) {$C_p(X)\text{ is cofinally Polish}$};
    \node (TR) at (1,1) {$B_1(X)\text{ is cofinally Polish}$};
    \node (BL) at (0,0) {$C_p(X, [0,1])\text{ is cofinally Polish}$};
    \node (BR) at (1,0) {$B_1(X, [0,1])\text{ is cofinally Polish}$};

    \draw[line width=0.8pt, double, double distance=2pt, ->] (0.37,1) -- (0.6,1);

    \draw[line width=0.8pt, double, double distance=2pt, ->] (-0.1,0.8) -- (-0.1,0.2);

    \draw[line width=0.8pt, double, double distance=2pt, ->] (0.37,0) -- (0.6,0);

    \draw[line width=0.8pt, double, double distance=2.5pt, ->] (0.9,0.8) -- (0.9,0.2);


     \draw[dashed, ->] (-0.05, 0.2) -- node[pos=0.5] {$\times$}
        node[pos=0.5, right, xshift=-1mm, font=\footnotesize] {\begin{tabular}{c}Ex. 6.1\end{tabular}}
        (-0.05, 0.8);

    \draw[dashed, ->] (0.95, 0.2) -- node[pos=0.5] {$\times$} (0.95, 0.8);
    \node[font=\footnotesize] at (1.05, 0.5) {\begin{tabular}{c}Ex. 6.3\end{tabular}};


    \draw[dashed, ->] (0.8, 1.1) .. controls (0.5, 1.4) .. node[pos=0.5] {$\times$} node[pos=0.5, above, yshift=1mm, font=\footnotesize] {\begin{tabular}{c}Ex. 6.2\end{tabular}} (0.2, 1.1);

    \draw[dashed, ->] (0.8, -0.1) .. controls (0.5, -0.4) .. node[pos=0.5] {$\times$} node[pos=0.5, below, yshift=-1mm, font=\footnotesize] {\begin{tabular}{c}Ex. 6.3\end{tabular}} (0.2, -0.1);



\end{tikzpicture}
\end{center}
\caption{Function relationship between the
study spaces}
\label{fig:diagram2}
\end{figure}

\section{Appendix}

Summarizing all the results, one can formulate-by way of application-theorems containing the largest number (though by no means all) of the equivalences.

\newpage

\begin{theorem} For any space $X$, the following statements are equivalent:

(1) $C_p(X)$ is pseudocomplete;

(2) $C_p(X)$ is a strong Choquet space;

(3) $C_p(X)$ is countably base-compact;

(4) $C_p(X)$ is cofinally Polish;

(5) $C_p(X)$ has a dense cofinally Polish subspace;

(6) $C_p(X)$ is $G_{\delta}$-dense in $\mathbb{R}^X$;

(7) $C_p(X)$ is $\omega$-full in $\mathbb{R}^X$;

(8)  $C_p(X)$ is a Choquet space;

(9) Every countable subset of $X$ is $C$-embedded and closed.

\end{theorem}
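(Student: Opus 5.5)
This appendix theorem is a restatement of Theorem \ref{th5}, with the items reordered. I would therefore prove it by matching the new numbering to the old one and then citing the equivalences already established there.

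The dictionary is as follows. Items (1)--(5) here coincide with items (1)--(5) of Theorem \ref{th5}. Item (6) here ($G_\delta$-density in $\mathbb{R}^X$) is item (7) there. Item (7) here ($\omega$-fullness) is item (8) there. Item (8) here (Choquet) is item (9) there. Item (9) here (every countable subset is $C$-embedded and closed) is item (6) there.

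The chain of implications then runs like this.

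\begin{itemize}
\item $(1)\Leftrightarrow(9)$: this is Pytkeev's theorem (Theorem 2 in \cite{Pyt}).
\item $(1)\Leftrightarrow(4)\Leftrightarrow(5)$: this is Theorem 3.4 in \cite{Tkachuk}.
\item $(9)\Leftrightarrow(7)$: if every countable $S\subseteq X$ is closed, then $S$ is discrete. Hence $\mathbb{R}^S=C(S)$, so $C$-embeddedness of all countable sets says exactly that $C_p(X)$ is $\omega$-full in $\mathbb{R}^X$. Conversely, $\omega$-fullness makes every countable set closed: any point $x\notin S$ can be separated from $S$ by a continuous extension of the function equal to $0$ on $S$ and $1$ at $x$. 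It also makes every countable set $C$-embedded, since every real function on $S$ extends.
\item $(6)\Leftrightarrow(7)$: this is Proposition 2.16 of \cite{BG}, since $\mathbb{R}$ has countable pseudocharacter.
\item $(2)$, $(3)$, $(8)$: these are tied to $(6)$ and $(7)$ via Theorem 2.21 of \cite{BG}, as in the proof of Theorem \ref{th5}. Within this group, $(3)\Rightarrow(2)\Rightarrow(8)$ is trivial. For $(7)\Rightarrow(3)$, I would use that $\mathbb{R}^X$ is a Moscow, countably base-compact product of first-countable spaces, so its $G_\delta$-dense subspaces are countably base-compact (Proposition 2.19 of \cite{BG}). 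For $(8)\Rightarrow(6)$, I would use that $C_p(X)$ is a topological group with Raikov completion $\mathbb{R}^X$, and apply the criterion of \cite{BH}: a Choquet group is $G_\delta$-dense in its completion.
\end{itemize}

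The only point requiring care is the last implication, $(8)\Rightarrow(6)$. There one must check that the Raikov completion of $C_p(X)$ is indeed $\mathbb{R}^X$, which holds because $C_p(X)$ is a dense subgroup of the complete group $\mathbb{R}^X$. Everything else is bookkeeping against Theorem \ref{th5}.
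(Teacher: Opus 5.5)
Your proposal is correct and follows the paper. The appendix theorem is Theorem~\ref{th5} relabelled, and your dictionary and your appeals to \cite{Pyt}, \cite{Tkachuk} and \cite{BG} match its proof. The paper invokes Theorem 2.21 of \cite{BG} wholesale for items (2), (3), (6), (7), (8); you instead close the cycle $(6)\Rightarrow(3)\Rightarrow(2)\Rightarrow(8)\Rightarrow(6)$ explicitly via Proposition 2.19 of \cite{BG} and Theorem 3 of \cite{BH}, the same ingredients the paper uses for Theorem~\ref{th6}, and you also supply the direction, left implicit in the paper, that $\omega$-fullness forces every countable set to be closed.
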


\begin{theorem} For any space X, the following statements are equivalent:

(1) $C_p(X,[0,1])$ is pseudocomplete;

(2) $C_p(X,[0,1])$ is pseudocompact;

(3) $C_p(X,[0,1])$ is a strong Choquet space;

(4) $C_p(X,[0,1])$ is countably base-compact;

(5) $C_p(X,[0,1])$ is cofinally Polish;

(6) $C_p(X,[0,1])$ has a dense cofinally Polish subspace;

(7) $C_p(X,[0,1])$ is projectively Menger;

(8) $C_p(X,[0,1])$ is a Choquet space;

(9) $C_p(X,[0,1])$ is $G_{\delta}$-dense in $[0,1]^X$;

(10) $C_p(X,[0,1])$ is $\omega$-full in $[0,1]^X$;

(11) $C_p(X,[0,1])$ is $\sigma$-pseudocompact;

(12) $C_p(X,[0,1])$ is $\sigma$-bounded;

(13) $X$ is $b$-discrete.
\end{theorem}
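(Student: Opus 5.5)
Most of the equivalences are already in hand, so the plan is to reuse Theorem \ref{th6} and attach the remaining conditions (7)--(12) to its chain. Theorem \ref{th6} gives (1)$\Leftrightarrow$(2)$\Leftrightarrow$(3)$\Leftrightarrow$(4)$\Leftrightarrow$(5)$\Leftrightarrow$(6)$\Leftrightarrow$(13). For (7)$\Leftrightarrow$(13) I will cite Theorem 2.4 of \cite{Sakai}, as quoted in the section on the projectively Menger property. What is left is to link (8), (9), (10), (11), (12) to this chain.

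For the Choquet and fullness conditions the chain runs
(3)$\Rightarrow$(8)$\Rightarrow$(9)$\Rightarrow$(10)$\Rightarrow$(13).
The step (3)$\Rightarrow$(8) holds because every strong Choquet space is Choquet. For (8)$\Rightarrow$(9) I will rerun the argument from the proof of Theorem \ref{th6}, (1)$\Rightarrow$(7), which in fact only uses that $C_p(X,[0,1])$ is Choquet:
\begin{itemize}
\item Lemma \ref{lem1}, applied to the almost open map $\varphi^X$, shows that $C_p(X,\mathbb{S}^1)$ is Choquet.
\item The group result of \cite{BH} then gives that $C_p(X,\mathbb{S}^1)$ is $G_\delta$-dense in $(\mathbb{S}^1)^X$.
\item That argument went on to show $X$ is $b$-discrete, i.e. (8)$\Rightarrow$(13) directly. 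Hence (9) follows from (13) by the argument already given for (7)$\Rightarrow$(4) in Theorem \ref{th6}, which established $G_\delta$-density of $C_p(X,[0,1])$ in $[0,1]^X$.
\end{itemize}
For (9)$\Leftrightarrow$(10), Proposition 2.16 of \cite{BG} applies because $[0,1]$ has countable pseudocharacter. For (10)$\Rightarrow$(13), given disjoint countable $A,B\subseteq X$, extend the function equal to $0$ on $A$ and $1$ on $B$ to a continuous map. This gives disjoint zero-sets containing $A$ and $B$, and therefore every countable set is closed. Applying the same extension to arbitrary bounded functions on a countable set (each such function is trivially continuous on it, since $A\cup B$, indeed any countable set, turns out discrete) gives $C^*$-embeddedness.

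For the $\sigma$-conditions the chain runs (2)$\Rightarrow$(11)$\Rightarrow$(12)$\Rightarrow$(7).
\begin{itemize}
\item (2)$\Rightarrow$(11) is trivial.
\item (11)$\Rightarrow$(12): a pseudocompact subset of a Tychonoff space is bounded in it, so a countable union of pseudocompact subspaces is $\sigma$-bounded.
\item (12)$\Rightarrow$(7): the image of a bounded set under a continuous map to a second countable space $M$ is bounded in $M$, hence has compact closure in the metrizable space $M$. So every second countable continuous image is $\sigma$-compact, hence Menger, which is exactly projective Mengerness.
\end{itemize}
The only step needing care is this last one. There I must check that boundedness is preserved by continuous images: every locally finite family of open sets in $M$ pulls back to a locally finite family, so boundedness is indeed preserved. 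I must also check that bounded subsets of a metrizable space have compact closure. This holds because a closed metrizable pseudocompact-in-$M$ set is countably compact, being bounded and closed in a metric space.

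This closes all the cycles, and the theorem follows.
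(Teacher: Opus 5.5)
Your proposal is correct. Its skeleton matches how the paper gets this summary theorem; the paper writes no separate proof and simply combines Theorem \ref{th6}, Sakai's Theorem 2.4, the Choquet argument inside the proof of Theorem \ref{th6}, and Proposition 2.16 of \cite{BG}. The real difference is in (11) and (12). The paper lists these without any argument, the natural source being Tkachuk's work in \cite{Tk86}. You instead route them through projective Mengerness, via $(2)\Rightarrow(11)\Rightarrow(12)\Rightarrow(7)$ and then Sakai's theorem. That is a clean choice. It needs only two elementary facts: continuous images of bounded sets are bounded, and bounded sets in a metrizable space are relatively compact. Together these make every second countable image of a $\sigma$-bounded space $\sigma$-compact. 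This avoids a second external characterization, at the cost of leaning entirely on Sakai's result to return to $b$-discreteness.

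Two justifications should be tightened.

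\emph{The step (13)$\Rightarrow$(9).} The step $(7)\Rightarrow(4)$ in the proof of Theorem \ref{th6} does not establish $G_\delta$-density; it presupposes it. Use the direct argument instead. If $X$ is $b$-discrete, every countable $A\subseteq X$ is discrete, so any $f\colon A\to[0,1]$ is continuous. By $C^*$-embeddedness it extends to a bounded continuous function on $X$. Truncating by $t\mapsto\min\{1,\max\{0,t\}\}$ then gives an extension in $C(X,[0,1])$. So (13)$\Rightarrow$(10)$\Rightarrow$(9).

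\emph{Relative compactness in $M$.} Your reason why a bounded $B\subseteq M$ has compact closure is garbled. The phrase ``bounded and closed in a metric space'' reads like metric boundedness, which does not suffice. Argue as follows. First, $\overline{B}$ is still bounded in $M$. Second, $\overline{B}$ is $C$-embedded in $M$ by Tietze's theorem, so it is pseudocompact. Third, a pseudocompact metrizable space is compact. This also uses the standing convention that second countable images are Tychonoff, hence metrizable.
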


\begin{theorem} For any space $X$ the following assertions are equivalent:

(1) $B_1(X)$ is pseudocomplete;

(2) $B_1(X)$ is cofinally Polish;

(3) $B_1(X)$ has a dense cofinally Polish subspace;

(4) $B_1(X)$ is a strong Choquet space;

(5) $B_1(X)$ is countably base-compact;

(6) $B_1(X)$ is $G_{\delta}$-dense in $\mathbb{R}^X$;

(7) $B_1(X)$ is $\omega$-full in $\mathbb{R}^X$;

(8)  $B_1(X)$ is a Choquet space;

(9) Every countable subset of $X$ is $B_1$-embedded;

(10) $X$ is a $\Delta_1$-space.

\end{theorem}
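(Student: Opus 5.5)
This statement is the appendix summary for $B_1(X)$, so the plan is to assemble it from Theorem \ref{th1} and Theorem \ref{th2} rather than argue anything new. Theorem \ref{th2} already gives the mutual equivalence of (1)--(5): pseudocompleteness, Polish cofinality, having a dense cofinally Polish subspace, the strong Choquet property and countable base-compactness. Theorem \ref{th1} gives the mutual equivalence of the remaining conditions: $B_1(X)$ is Choquet (8), every countable subset is $B_1$-embedded (9), $X$ is a $\Delta_1$-space (10), $B_1(X)$ is pseudocomplete, $B_1(X)$ is $G_\delta$-dense in $\mathbb{R}^X$ (6), and $B_1(X)$ is $\omega$-full in $\mathbb{R}^X$ (7). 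Pseudocompleteness appears in both lists, so it links the two blocks and every pair of the ten conditions becomes equivalent.

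Concretely, I would write the chain as
\[
(2)\Leftrightarrow(3)\Leftrightarrow(4)\Leftrightarrow(5)\Leftrightarrow(1)\Leftrightarrow(6)\Leftrightarrow(7)\Leftrightarrow(8)\Leftrightarrow(9)\Leftrightarrow(10).
\]
The first four links come from Theorem \ref{th2}, and the remaining links from Theorem \ref{th1}, after matching its items to the numbering here.

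For completeness I would note how the nontrivial link $(3)\Rightarrow(9)$ in Theorem \ref{th2} is obtained. One takes the projection $p_A$ onto $\mathbb{R}^A$ for a countable $A\subset X$. By Theorem 3.3 of \cite{Tkachuk}, $p_A(Z)$ has a dense Polish subspace. A translation argument then shows $B_1(A|X)=\mathbb{R}^A$: if some $f\in\mathbb{R}^A$ were missing, $B_1(A|X)$ and its coset $B_1(A|X)+f$ would be disjoint, yet each contains a dense Polish subspace of $\mathbb{R}^A$, which is impossible.

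There is no real obstacle, since all the work sits in the cited theorems. The only point to check is bookkeeping: Theorem \ref{th1} must be applied with the same standing assumptions on $X$ (an arbitrary Tychonoff space), and its numbering must be matched correctly to (6)--(10) here.
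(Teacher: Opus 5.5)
Your proposal is correct and matches the paper. The paper states this appendix theorem without a separate proof, as a summary: it joins Theorem~\ref{th2} (items (1)--(5)) with Theorem~\ref{th1} (Choquet, $B_1$-embeddedness of countable sets, $\Delta_1$, pseudocompleteness, $G_\delta$-density, $\omega$-fullness) through the shared condition of pseudocompleteness, exactly as you do, and your sketch of the translation argument for $(3)\Rightarrow(9)$ follows the paper's proof of Theorem~\ref{th2}.
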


\newpage

\begin{theorem} For any space $X$ the following assertions are equivalent:

(1)  $B_1(X,[0,1])$ is pseudocomplete;

(2) $B_1(X,[0,1])$ is cofinally Polish;

(3) $B_1(X,[0,1])$ has a dense cofinally Polish subspace;

(4) $B_1(X,[0,1])$ is pseudocompact;

(5) $B_1(X,[0,1])$ is a strong Choquet space;

(6) $B_1(X,[0,1])$ is countably base-compact;

(7) $B_1(X,[0,1])$ is projectively Menger;

(8) $B_1(X,[0,1])$ is a Choquet space;

(9) $B_1(X,[0,1])$ is $G_{\delta}$-dense in $[0,1]^X$;

(10) $B_1(X,[0,1])$ is $\omega$-full in $[0,1]^X$;

(11) $X$ is a weak $\lambda$-space.

\end{theorem}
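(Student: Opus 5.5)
The statement is the summary theorem for $B_1(X,[0,1])$. Almost all of it is assembled from results already proved. The plan is to route every item through the weak $\lambda$-property (11) and through pseudocompactness (4).

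\textbf{Step 1: items already handled.} Theorem \ref{th3} gives the equivalence of (1)--(6). Theorem \ref{th11} gives (4)$\Leftrightarrow$(11) and (8)$\Leftrightarrow$(11). The theorem of Section 5 on the projective Menger property gives (7)$\Leftrightarrow$(11). So the only items still to attach are (9) and (10).

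\textbf{Step 2: (9)$\Leftrightarrow$(10).} Since $[0,1]$ has countable pseudocharacter, Proposition 2.16 of \cite{BG} applies. It says that $G_\delta$-density of a family $\mathcal{F}\subseteq [0,1]^X$ in $[0,1]^X$ is equivalent to $\omega$-fullness of $\mathcal{F}$ in $[0,1]^X$. This was already used in the proof of Theorem \ref{th6}, and it gives (9)$\Leftrightarrow$(10) at once.

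\textbf{Step 3: (10)$\Leftrightarrow$ condition (2) of Theorem \ref{th11}.} I will identify (10) with the statement that every countable subset of $X$ is $B_1^*$-embedded.

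Assume (10). Let $A\subseteq X$ be countable and let $f\in B_1(A)$ be bounded. Compose $f$ with an affine map so that it takes values in $[0,1]$. Then (10) gives some $\tilde f\in B_1(X,[0,1])$ extending it, and undoing the affine map extends the original $f$.

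Conversely, assume every countable set is $B_1^*$-embedded, and take an arbitrary $g\colon A\to[0,1]$ with $A$ countable. The main obstacle is that $g$ need not be Baire-one on $A$ with the subspace topology. I would handle it through the weak $\lambda$-property, which is available by Theorem \ref{th11}. The first sub-step is that every subset $Y$ of the countable set $A$ is $G_\delta$--$F_\sigma$ in $A$: apply weak $\lambda$ to the disjoint countable sets $Y$ and $A\setminus Y$ and intersect the resulting set with $A$.

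From this it follows that every function on $A$ is Baire-one. Take a countable dense set $\{q_k\}$ of values. Every preimage $g^{-1}(q_k,\infty)$ and $g^{-1}(-\infty,q_k)$ is a subset of $A$, hence $F_\sigma$ in $A$. So $g$ is of the first Baire class in the Lebesgue sense. Such functions are Baire-one on countable (hence normal, or at least perfectly normal-enough) spaces by the Lebesgue--Hausdorff theorem.

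A more direct route to the same conclusion is to write $g$ as a uniform limit of step functions whose pieces are $\Delta^0_2$ in $A$. Each such step function is Baire-one, and uniform limits of Baire-one functions are Baire-one.

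In either case, $B_1^*$-embeddedness of $A$ then extends $g$ to an element of $B_1(X,[0,1])$. Hence (11)$\Rightarrow$(10), and so (10)$\Rightarrow$(2) of Theorem \ref{th11}$\Rightarrow$(11).

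\textbf{Step 4: fallback for (11)$\Rightarrow$(9).} If the Lebesgue--Hausdorff step is delicate for non-normal countable subspaces, there is an alternative. Pseudocompactness (4) of the dense subspace $B_1(X,[0,1])$ of $[0,1]^X$ implies $G_\delta$-density in $[0,1]^X$ via the standard fact that a pseudocompact dense subspace of a product of compact metrizable spaces is $G_\delta$-dense. This gives (4)$\Rightarrow$(9) directly.

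For the reverse direction, (9)$\Rightarrow$(2) of Theorem \ref{th11} holds by the argument of the first half of Step 3. This closes all the equivalences.
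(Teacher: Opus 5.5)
Your proposal is correct and is essentially the assembly the paper intends. (1)--(6) come from Theorem~\ref{th3}; (4), (8) and (11) from Theorem~\ref{th11}; (7) from the projective Menger theorem of Section~5; and (9)$\Leftrightarrow$(10) from Proposition~2.16 of \cite{BG}. In the paper, (10) is exactly the identity $[0,1]^A=B_1(A|X,[0,1])$ proved in the argument for Theorem~\ref{th3}. Your (4)$\Rightarrow$(9) is the pseudocompact-dense-implies-$G_\delta$-dense fact that the paper uses tacitly when applying Proposition~2.19 of \cite{BG}.

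Two cosmetic points in Step~3. First, the extension from $B_1^*$-embeddedness is only real-valued, so compose it with the retraction $t\mapsto\min\{\max\{t,0\},1\}$ to land in $B_1(X,[0,1])$. Second, the weak $\lambda$ detour is unnecessary: for countable Tychonoff $A=\{a_k\}$, any $g\colon A\to[0,1]$ is the pointwise limit of continuous $g_n$ that agree with $g$ on $\{a_1,\dots,a_n\}$.
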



\begin{thebibliography}{30}

\bibitem{AL}
J. Aarts, D. Lutzer, Completeness properties designed for recognizing Baire spaces, Diss. Math. 116 (1974) 1-43.





\bibitem{Arh1}
A.V. Arhangel'skii, M.G. Tkachenko, Topological Groups and Related Structures, Atlantis Press/World Scientific,
Amsterdam-Paris, 2008.




\bibitem{Arh2}
A.V. Arhangel'skii, $C_p$-theory, in: K. Kunen, J.E. Vaughan (Eds.), Handbook of Set-Theoretic
Topology, Elsevier Sci. Pub., 1984, pp. 3--56.


\bibitem{BG}
T. Banakh, S. Gabriyelyan, Baire category of some Baire type function
spaces, Topology and its Applications, 272, 2020, 107078.


\bibitem{BH}
T. Banakh and O. Hryniv, Some Baire category properties of topological groups, Visnyk Lviv. Univ. Ser.
Mech.-Mat. 86, 71--76 (2018).

\bibitem{BCM}
 M. Bonanzinga, F. Cammaroto, M. Matveev, Projective versions of selection principles, Topol. Appl. 157 (2010) 874--893.

\bibitem{Eng}
R. Engelking, General Topology, Heldermann, Berlin, 1989.




\bibitem{GJ}
L. Gillman, M. Jerison,  Rings of continuous functions, Van Nostrand,(1960).

\bibitem{HTMT}
M. Hru\v{s}\'{a}k, \'{A}. Tamariz-Mascar\'{u}a and M. Tkachenko (edt.), Pseudocompact Topological groups: A Survey of Results and Problems, Developments in Mathematics, Vol. 55, Springer, Cham, 2019.

\bibitem{Hur}
W. Hurewicz, \"{U}ber eine verallgemeinerung des Borelschen Theorems, Math. Z. 24 (1925) 401--421.


\bibitem{Is}
T. Isiwata, $\kappa$-metric spaces and function spaces, Topology Proceedings, 11, 1986, 291-308.



\bibitem{JSS}
I. Juh\'{a}sz, L. Soukup, Z. Szentmikl\'{o}ssy, $\mathcal{D}$-forced spaces: A new ap-
proach to resolvability, Topology Appl. 156 (2006) 1800--1824.


\bibitem{KKL}
J. K\c{a}kol, O. Kurka, A. Leiderman, Some classes of topological spaces extending the class of $\Delta$-spaces, Proc. Amer. Math. Soc., 152:2 (2024), 883--
898.



\bibitem{Kechris}
A.S. Kechris, Classical Descriptive Set Theory, Springer-Verlag, New York, 1995.



\bibitem{Kur}
K. Kuratowski, Topology, I, PWN, Warszawa, 1966.




\bibitem{NiTkYe}
J. Niknejad, V.V. Tkachuk, L. Yengulalp,  Polish factorizations, cosmic spaces and domain representability.
Bull. Belg. Math. Soc. 25(3), 439--452 (2018).



\bibitem{OS1}
A.V. Osipov, The $\Delta_1$-property of $X$ is equivalent to the Choquet property of $B_1(X)$, Topology and its Applications, 370 (2025), 109395.


\bibitem{Os2}
A.V. Osipov, On Completeness Type Properties of Spaces
of First Lebesgue Functional Class Mappings,  Proceedings of the Steklov Institute of Mathematics
(published in Trudy Instituta Matematiki i Mekhaniki UrO RAN, 2025, Vol. 31, No. 3, 200-214).


\bibitem{Osip1}
A.V. Osipov, Baire property of the space of Baire-one functions. European Journal of Mathematics 11, 8 (2025).

\bibitem{Os26}
A.V. Osipov, Some function applications of weak $\lambda$-spaces, arxiv.org/abs/2608.30278.


\bibitem{Ox}
J.C. Oxtoby, The Banach-Mazur game and Banach category theorem, in Contributions to the Theory of
Games (Princeton Univer. Press, NJ, Princeton, 1957), 159-163.

\bibitem{Pyt}
E.G. Pytkeev, Baire property of spaces of continuous functions. Mathematical Notes of the Academy of Sciences of the USSR 38, 908--915 (1985).


\bibitem{RRS}
V. de O. Rodrigues, V. dos S. Ronchim, P. J. Szeptycki, Special sets of reals and weak form of normality on Isbell-Mr\'{o}wka spaces, Commentationes Mathematicae Universitatis Carolinae, Vol. 64 (2023), No. 1, 109--126.



\bibitem{Sakai}
M. Sakai, The projective Menger property and an embedding of $S_{\omega}$ into
function spaces, Topology and its Applications 220 (2017) 118--130.





\bibitem{Sh}
D.B. Shakhmatov, A pseudocompact Tychonoff space all countable subsets of which are closed and $C^*$-embedded, Topol.
Appl. 22 (1986) 139--144.




\bibitem{Tkachuk}
V.V. Tkachuk, The space $C_p(X)$ is cofinally Polish if and only if it is pseudocomplete. RACSAM 115, 68 (2021).

\bibitem{Tkachuk2}
V.V. Tkachuk, A $C_p$-Theory Problem Book. Topological and Function Spaces. Springer, NewYork (2011).


\bibitem{Tk14}
V.V. Tkachuk,  A $C_p$-Theory Problem Book. Special Features of Function Spaces. Springer, New York
(2014).

\bibitem{Tk86}
V.V. Tka\v{c}uk, The spaces $C_p(X)$: decomposition into a countable union of bounded subspaces and completeness properties,
Topol. Appl. 22 (1986) 241--253.


\bibitem{Tk22}
V.V. Tka\v{c}uk, $C_p$-THEORY IN 2022, Questions and Answers in General Topology, 41 (2023), 75--133.

\end{thebibliography}

\end{document}